\documentclass[12pt,a4paper]{amsart}
\usepackage[T1]{fontenc}
\usepackage[utf8]{inputenc}
\usepackage{lmodern}
\usepackage{amsmath,amssymb,amsthm,amscd,mathtools}
\usepackage[margin=25mm,top=27mm,bottom=27mm]{geometry}
\usepackage{microtype}
\usepackage{tikz}
\usetikzlibrary{calc, arrows.meta}
\usepackage[hidelinks]{hyperref}
\hypersetup{pdftitle={On the orbits of the automorphism group of toroidal spherical varieties},pdfauthor={Alexander Chernov},pdfsubject={Automorphism orbits on affine and smooth complete toroidal spherical varieties}}
\newtheorem{theorem}{Theorem}
\newtheorem*{theorem*}{Theorem}
\newtheorem{proposition}{Proposition}
\newtheorem{lemma}{Lemma}
\newtheorem{corollary}{Corollary}
\theoremstyle{definition}
\newtheorem{definition}{Definition}
\newtheorem{example}{Example}
\newtheorem{remark}{Remark}
\newcommand{\K}{\mathbb K}
\newcommand{\Z}{\mathbb Z}
\newcommand{\Q}{\mathbb Q}
\newcommand{\A}{\mathbb A}
\newcommand{\PP}{\mathbb P}
\newcommand{\Ga}{\mathbb G_a}
\newcommand{\Gm}{\mathbb G_m}
\newcommand{\cO}{\mathcal O}
\newcommand{\cD}{\mathcal D}
\newcommand{\cF}{\mathcal F}
\newcommand{\cT}{\mathcal T}
\newcommand{\cS}{\mathcal S}
\newcommand{\Cl}{\operatorname{Cl}}
\newcommand{\Pic}{\operatorname{Pic}}
\newcommand{\Aut}{\operatorname{Aut}}
\newcommand{\Lie}{\operatorname{Lie}}
\newcommand{\Spec}{\operatorname{Spec}}
\newcommand{\supp}{\operatorname{Supp}}
\newcommand{\divisor}{\operatorname{div}}
\newcommand{\ord}{\operatorname{ord}}

\newcommand{\id}{\operatorname{id}}

\newcommand{\SL}{\operatorname{SL}}
\newcommand{\SO}{\operatorname{SO}}

\newcommand{\Hom}{\operatorname{Hom}}
\newcommand{\Nn}{\Z_{\geq 0}}
\newcommand{\gen}[1]{\langle #1\rangle}
\newcommand{\pair}[2]{\langle #1,#2\rangle}
\newcommand{\arxiv}[1]{\href{https://arxiv.org/abs/#1}{arXiv:#1}}
\numberwithin{equation}{section}
\title{On orbits of the automorphism group of toroidal spherical varieties}
\author{Alexander Chernov}
\address{\scshape Alexander Chernov \newline HSE University, Faculty of Computer Science, Pokrovsky Boulevard 11, Moscow, 109028 Russia}
\email{chenov2004@gmail.com}
\author{Klim Rolgin}
\address{\scshape Klim Rolgin \newline Lomonosov Moscow State University, Faculty of Mechanics and Mathematics, Department of Higher Algebra, Leninskie Gory 1, Moscow, 119991 Russia}
\email{panamesaasem@gmail.com}
\thanks{This article is an output of a research project (HSE-BR-2025-22) implemented as part of the
Basic Research Program at HSE University.}
\subjclass[2020]{Primary 14M27, 14L30; Secondary 14M25, 14R20, 14J50}
\keywords{Spherical variety, toric variety, equivariant embedding, automorphism group, root subgroup, homogeneous space}

\date{}

\begin{document}
\begin{abstract}
In this paper we study the orbits of the automorphism group on a toroidal spherical variety. In the affine case, we prove that two points lie in the same orbit of the identity component if and only if the kernels of the homomorphisms from the divisor class group to their local divisor class groups coincide. These kernels are described in terms of $B$-stable prime divisors. For a smooth complete toroidal spherical variety, we obtain a similar criterion using the monoids generated by the classes of $B$-stable prime divisors that do not contain a given $G$-orbit.
\end{abstract}
\maketitle

\section*{Introduction}

Let $\K$ be an algebraically closed field of characteristic zero. By a variety or an algebraic group we always mean an algebraic variety or an algebraic group over~$\K$. By open and closed subsets of algebraic varieties we always mean open and closed subsets in the Zariski topology. We denote by~$\Ga$ the algebraic group~$(\K,+)$ and by~$\Gm$ the algebraic group~$(\K^\times,\cdot)$. Throughout the paper $G$ denotes a connected reductive algebraic group,~$B\subseteq G$ a fixed Borel subgroup, and $T\subseteq B$ a maximal torus.

\begin{definition}
    An irreducible normal $G$-variety $X$ is called \emph{spherical} if the Borel subgroup~$B$ acts on~$X$ with an open orbit.
\end{definition}

Any spherical variety $X$ has finitely many $B$-orbits and hence it has finitely many $G$-orbits; it was simultaneously proven by Brion in~\cite{Bri} and by Vinberg in~\cite{Vin86}. The naturally arising problem is to describe the orbits of the whole automorphism group $\Aut(X)$ or its identity component $\Aut^0(X)$. Obviously, points from the same $G$-orbit lie in the same $\Aut^0(X)$-orbit since the image of $G$ in $\Aut(X)$ lies in $\Aut^0(X)$, and hence we need to understand which $G$-orbits can be connected via some automorphism not lying in~$G$.

This problem was explicitly solved by Arzhantsev and Bazhov in the case of an affine toric variety in~\cite{AB}, and by Bazhov in the case of a complete toric variety in~\cite{Baz}. Toric varieties are a special case of spherical varieties; they arise when $G=B=T$. In the affine case Arzhantsev and Bazhov obtained a criterion for two $T$-orbits to lie in the same orbit of~$\Aut^0(X)$ in terms of the local divisor class group. In particular, the orbits of the identity component coincide with the orbits of the group generated by the acting torus and the torus-normalized one-parameter unipotent subgroups. For complete toric varieties, Bazhov described the $\Aut^0(X)$-orbits by using monoids generated by classes of invariant prime divisors that do not contain the $T$-orbit. Thus, both the affine and complete cases admit criteria expressed in terms of divisor classes, although the invariants are different.

The theory of spherical varieties provides a setting in which one can ask for analogs of these results. The theory of embeddings of homogeneous spaces was developed by Luna and Vust in~\cite{LV}. Their description of spherical embeddings by colored fans extends the combinatorial description of toric varieties; see~\cite{Kn91}. Timashev's monograph~\cite{Tim11} gives a systematic account of this theory and its applications.

The study of the orbits of the whole automorphism group is closely related to the concept of flexibility of affine varieties introduced in~\cite{AFKKZ}. An affine variety $X$ is called \emph{flexible} if the tangent vectors to $\Ga$-orbits span the tangent space at every smooth point of $X$. It is shown in~\cite{AFKKZ} that this is equivalent to the following condition: the subgroup of $\Aut^0(X)$ generated by all $\Ga$-subgroups acts transitively on the set of regular points of~$X$.

Any flexible variety has no non-constant invertible regular functions, but the converse is not true in general. First, Shafarevich proved that affine horospherical varieties of semisimple groups are flexible~\cite{Sh17}. Later, Gaifullin and Shafarevich proved in~\cite{GS19} that a normal affine horospherical variety is flexible if and only if it has no non-constant invertible regular functions; the non-normal case is treated in~\cite{GK26}. In the recent paper~\cite{Sh} Shafarevich finishes the study of the flexibility of affine spherical varieties, proving that $X$ is flexible if and only if any invertible regular function on $X$ is constant.

In this sense, the affine part of this paper continues Shafarevich's investigations. We pass from transitivity on the regular locus to a description of all $\Aut^0(X)$-orbits, including orbits which are contained in the singular locus of $X$.

The $B$-stable but not $G$-stable prime divisors on a spherical variety are called~\emph{colors}.

\begin{definition}
A $G$-orbit $Y$ in a spherical $G$-variety $X$ is called \emph{toroidal} if it is not contained in any color. The variety itself is called \emph{toroidal} if all its $G$-orbits are toroidal.
\end{definition}

In this paper, we prove orbit criteria for affine toroidal spherical varieties and for smooth complete toroidal spherical varieties. First, we introduce the following invariants. For a~$G$-orbit $Y\subseteq X$, let $\cD^B(Y)$ be the set of all $B$-stable prime divisors not containing $Y$. Put
\begin{equation}\label{eq:invariants}
 S(Y)=\sum_{D\in\cD^B(Y)}\Z[D]\subseteq\Cl(X),
 \qquad
 \Gamma(Y)=\sum_{D\in\cD^B(Y)}\Nn[D]\subseteq\Cl(X).
\end{equation}

Our main results are the following two theorems.
\begin{theorem*}[Theorem~\ref{thm:affine}]
    Let $X$ be an affine toroidal spherical $G$-variety, and let $x,x'$ be two points on $X$. Then the following conditions are equivalent:
    \begin{enumerate}
    \item $x$ and $x'$ lie in the same $\Aut^0(X)$-orbit;
    \item $\Cl_x(X)=\Cl_{x'}(X)$;
    \item $S(Gx)=S(Gx')$.
    \end{enumerate}
\end{theorem*}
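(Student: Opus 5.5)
The equivalences will be proved as a cycle, with $\Cl_x(X)$ understood as the kernel of the restriction map $\Cl(X)\to\Cl(\cO_{X,x})$, as in the abstract.

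\emph{(1)$\Rightarrow$(2).} Any automorphism $\varphi\in\Aut^0(X)$ acts on $\Cl(X)$, and since $\Aut^0(X)$ is connected this action should be trivial. The natural way to see this is to use that $\Cl(X)$ is finitely generated for a spherical variety, so it is a discrete invariant, and a connected family of automorphisms cannot move a class. Given that, $\varphi$ induces an isomorphism $\cO_{X,\varphi(x)}\cong\cO_{X,x}$ compatible with the identity on $\Cl(X)$. Hence the kernels coincide: $\Cl_x(X)=\Cl_{\varphi(x)}(X)$.

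\emph{(2)$\Leftrightarrow$(3).} Here I would compute $\Cl_x(X)$ explicitly for $x\in Y=Gx$. Every class in $\Cl(X)$ is represented by a combination of $B$-stable prime divisors, namely the colors and the $G$-stable divisors. The kernel of restriction to $\cO_{X,x}$ consists of the classes of divisors which are principal near $x$. Using the open $B$-chart $X_{Y,B}$ (the complement of the $B$-stable divisors not containing $Y$), I would argue the following. The divisors in $\cD^B(Y)$ do not pass through the $B$-orbit in general position in $Y$, so their classes die locally at a suitable point of $Y$, and by $G$-invariance at every point of $Y$. This gives $S(Y)\subseteq\Cl_x(X)$. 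For the reverse inclusion, the local structure theorem for toroidal orbits gives $X_{Y,B}\cong P_u\times Z$, with $Z$ an affine toric variety for a quotient of $T$ and $Y\cap Z$ its closed orbit. So $\Cl$ of the local ring equals the local class group of the toric variety $Z$ at its fixed point, which is torsion-free data of the cone. A divisor $\sum a_D D$ supported on the $B$-divisors containing $Y$ that is principal near $x$ must then be $\divisor(f)$ plus a combination of divisors from $\cD^B(Y)$, with $f$ a $B$-eigenfunction. This is exactly the standard toric argument transported through the local structure theorem. It gives $\Cl_x(X)=S(Gx)$, so (2) and (3) are the same condition.

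\emph{(3)$\Rightarrow$(1).} This is the main obstacle. Note first that (3) already yields (1) when $Gx=Gx'$. For distinct orbits I would copy Arzhantsev--Bazhov and find root subgroups: $\Ga$-actions normalized by $B$, or by $T$ together with $G$, that connect $G$-orbits. Suppose $Y'\subset\overline{Y}$ are two orbits with $S(Y)=S(Y')$. Then there is a $G$-stable divisor $D_\rho$ containing $Y'$ but not $Y$ whose class lies in the span of $\cD^B(Y')$. I would try to produce a homogeneous locally nilpotent derivation of $\K[X]$ of degree $-e$, where $e$ is the dual primitive vector behaving like a Demazure root relative to the ray $\rho$. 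The candidates are Demazure-type roots of the valuation cone/colored cone in the toroidal setting, as in Shafarevich's flexibility construction~\cite{Sh}. The corresponding $\Ga$-subgroup should move a generic point of $Y'$ into $Y$. The key lemma to establish is: $[D_\rho]\in S(Y')$ iff such a root exists with its orbits leaving $D_\rho$. In the toroidal affine case I would obtain the root from a $G$-equivariant replacement of the toric chart $Z$, extending a toric root on $Z$ by $P_u$-invariance. An induction along the orbit poset, using that orbits with equal $S$ form a chain connected through such divisors, then completes (3)$\Rightarrow$(1).
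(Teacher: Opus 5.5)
\textbf{The genuine gap is in (3)$\Rightarrow$(1).} You never construct the $\Ga$-actions. The ``key lemma'' relating $[D_\rho]\in S(Y')$ to a root whose orbits leave $D_\rho$ is announced rather than proved. So is the claim that orbits with equal $S$ are linked by such roots; such orbits need not even be comparable, so a chain argument would have to pass through the orbit of a common face. These two claims are the whole content of the implication.

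Moreover, the one concrete mechanism you mention, extending a toric root on $Z$ trivially along $P_u$, only gives a $\Ga$-action on the open chart $X_Y^0\cong P_u\times Z_Y$. It does not give a locally nilpotent derivation of $\K[X]$, and $\Ga$-actions on open subsets do not extend in general. The missing idea is that for affine $X$ this chart is principal: $X_Y^0=X_{f_Y}$ for a regular $B$-semi-invariant $f_Y$ (Shafarevich). A locally nilpotent derivation $\delta$ of $\K[X]_{f_Y}$ kills the unit $f_Y$. Hence the replica $f_Y^N\delta$ is, for large $N$, a locally nilpotent derivation of $\K[X]$ with the same orbits on $X_{f_Y}$.

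\textbf{How the paper closes it.} With the replica construction, no new root combinatorics or induction over orbits is needed.
\begin{enumerate}
\item Since $X$ is affine, it has a unique closed orbit $Y$. Since $X$ is toroidal, every $G$-orbit meets $X_Y^0$.
\item Move $x,x'$ by $G$ and then by $P_u$ to points $(e,z)$ and $(e,z')$ with $z,z'\in Z_Y$.
\item Deduce $\Cl_z(Z_Y)=\Cl_{z'}(Z_Y)$ from the compatibility of local class groups with open subsets and with products by $\A^n$.
\item Apply the Arzhantsev--Bazhov theorem to the affine toric variety $Z_Y$ as a black box. It connects $z$ to $z'$ by torus elements, which lift to the Levi subgroup $L$. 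It also uses elements of toric root subgroups, which extend trivially over $P_u$ and then to $X$ by the replica construction.
\end{enumerate}

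\textbf{Two smaller points.}
\begin{enumerate}
\item In (1)$\Rightarrow$(2), ``$\Cl(X)$ is discrete, so a connected family cannot move a class'' is only a heuristic. Here $\Aut^0(X)$ is defined through families over rational curves. The actual argument pulls $D$ back along $(c,x)\mapsto(c,\psi_c(x))$ on $C\times X$ with $C\subseteq\A^1$ open, uses $\Cl(C\times X)\cong\Cl(X)$, and restricts to the fibres over $c_1$ and $c_2$.
\item In (2)$\Leftrightarrow$(3), the local class group of $Z_Y$ is not ``torsion-free data of the cone''; the quadratic cone has $\Z/2\Z$. Also, the closed orbit of $Z_Y$ need not be a fixed point. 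What is actually used is $\Cl_z(Z_Y)=S_{T'}(O_Y)=0$ for $z$ in the closed orbit $O_Y$, together with the fact that the prime divisors in $X\setminus X_Y^0$ are exactly those in $\cD^B(Y)$.
\end{enumerate}
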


\begin{theorem*}[Theorem~\ref{thm:main-complete}]
    Let $X$ be a smooth complete toroidal spherical $G$-variety and $x, x'$ be two points on $X$. Then the following conditions are equivalent: \begin{enumerate}
        \item $x$ and $x'$ lie in the same $\Aut^0(X)$-orbit;
        \item $x$ and $x'$ lie in the same $\operatorname{GB}(X)$-orbit;
        \item $\Gamma(Gx)=\Gamma(Gx')$,
    \end{enumerate}
    where $\operatorname{GB}(X)$ denotes the subgroup of $\Aut^0(X)$ generated by $G$ and all $B$-root subgroups.
\end{theorem*}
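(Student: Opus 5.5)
The statement immediately above is the second main theorem (smooth complete toroidal case). The plan is to prove the cycle $(2)\Rightarrow(1)\Rightarrow(3)\Rightarrow(2)$, following Bazhov's strategy for complete toric varieties with $T$ replaced by $G$ and Demazure roots by $B$-root subgroups. The implication $(2)\Rightarrow(1)$ is immediate: $G$ and all $B$-root subgroups are connected and map into $\Aut^0(X)$.

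\emph{Step $(1)\Rightarrow(3)$.} Since $X$ is complete, $\Aut^0(X)$ is a connected algebraic group acting on $X$. Hence it acts trivially on $\Pic(X)=\Cl(X)$, which is a discrete group because $X$ is smooth and spherical, hence rational. For a point $x$, the monoid $\Gamma(Gx)$ should admit an intrinsic description: it is the set of classes $[L]\in\Pic(X)$ of line bundles admitting a global section not vanishing at $x$. One inclusion is clear, since an effective $B$-stable divisor avoiding $Gx$ gives such a section. For the converse, take a section $s$ with $s(x)\neq0$. Its zero divisor is linearly equivalent to a $B$-stable effective divisor, obtained by degenerating the $B$-orbit of $s$ in $\PP(H^0(L))$ to a $B$-fixed point, and one then checks that this divisor avoids $Gx$. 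The cleaner route is to use that $H^0(X,L)$ is a $G$-module whose $B$-eigenvectors are exactly the $B$-stable effective divisors in $|L|$. The $G$-translates of a lowest-weight combination then span, so if every $B$-eigen-section vanishes on the closed orbit in $\overline{Gx}$, every section vanishes at $x$. This needs care, and it uses that $\overline{Gx}$ contains a closed $G$-orbit and toroidality. Because $\Aut^0(X)$ lifts to linearized actions on the line bundles, this characterization is $\Aut^0(X)$-invariant, which gives $(3)$.

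\emph{Step $(3)\Rightarrow(2)$.} This is the main obstacle. I would use the colored-fan (here, uncolored-fan, by toroidality) description. The $G$-orbits correspond to cones $\sigma$ of a fan in the valuation cone, and $\cD^B(Gx)$ consists of all colors together with the $G$-stable divisors whose rays are not in $\sigma$. The relevant earlier results are those on $B$-root subgroups in the paper's setup, which I assume available: such subgroups are $\Ga$-actions normalized by $B$, given by $B$-homogeneous locally nilpotent derivations/Demazure-type roots $e$ relative to a $G$-stable divisor $D_\rho$. A root subgroup moves the orbit $\mathcal O_\sigma$ into $\mathcal O_{\sigma'}$ where $\sigma$ is a face of $\sigma'$ and $\sigma'$ is spanned by $\sigma$ and the distinguished ray $\rho$ of $e$, mirroring Bazhov's lemma. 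Given orbits with $\Gamma(Y)=\Gamma(Y')$, I would first pass to a common specialization. Equality of monoids forces the sets of removed rays to be linearly equivalent in a controlled way. Arguing as in Bazhov, if $\Gamma(Y)=\Gamma(Y')$ and $Y\neq Y'$, there is a ray $\rho$ in one cone such that $[D_\rho]$ lies in the monoid generated by the other $B$-stable divisors not containing the orbit. From that relation I would construct the root $e$, via the weight of a rational $B$-semi-invariant function whose divisor realizes the relation, and connect $Y$ to the orbit of the cone with $\rho$ removed.

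Induction on $\dim\sigma$ then reduces both orbits to a common one. Two checks are the delicate points: that the constructed $B$-semi-invariant really yields a $B$-root subgroup, where completeness and smoothness are needed for the relevant sheaf of sections to be a nontrivial finite-dimensional $G$-module with a $B$-lowest vector; and that colors never obstruct, which is exactly toroidality.
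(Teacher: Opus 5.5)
Your overall architecture matches the paper's: $(1)\Rightarrow(3)$ via the intrinsic description $\Gamma(Gx)=\{[\mathcal L]\mid \exists s\in H^0(X,\mathcal L),\ s(x)\neq0\}$ plus the trivial action of $\Aut^0(X)$ on $\Pic(X)$. For $(3)\Rightarrow(2)$, you pick $\rho\in\sigma(1)\setminus\tau(1)$, use $[D_\rho]\in\Gamma(Y_\tau)=\Gamma(Y_\sigma)$ to get $m\in M$ with $\divisor(f_m)=E-D_\rho$, remove $\rho$, and iterate down to $\sigma\cap\tau$.

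\textbf{Main gap.} The step carrying all the geometric content is assumed, not proved: that such an $m$ yields a $B$-root subgroup joining $Y_\sigma$ and $Y_{\sigma'}$. You flag this yourself as a ``delicate point''. On a complete $X$ there are no locally nilpotent derivations or Demazure-type formulas to fall back on. Knowing that some finite-dimensional $G$-module has a $B$-eigenvector does not produce a vector field with the required behaviour along the $G$-stable divisors.

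What you need is $\xi_m\in H^0(X,\cT_X)=\Lie\Aut^0(X)$ with three properties: it is a $B$-eigenvector of weight $m$; it is tangent to every $D_{\rho'}$ with $\rho'\neq\rho$; and its normal component along $D_\rho$ is nonzero at some point of $Y_\sigma$. The paper gets it from Bien--Brion: for smooth complete toroidal spherical $X$ the normal-component map $\nu\colon H^0(X,\cT_X)\to\bigoplus_i H^0(D_i,\cO_{D_i}(D_i))$ is surjective. The construction then runs as follows.
\begin{enumerate}
\item The conditions on $m$ (including $\pair{v_F}{m}\ge0$ for colors) make $s=f_m s_\rho$ a global section of $\cO_X(D_\rho)$ with $s(x)\neq0$ for some $x\in Y_\sigma\setminus\supp E$.
\item Place $s|_{D_\rho}$ in the $\rho$-slot, zeros elsewhere, and lift through a $G$-equivariant splitting of $\nu$ to obtain $\xi_m$.
\item Since $m\neq0$, $\xi_m$ is nilpotent, so $U_m=\exp(\K\xi_m)$ is a $B$-root subgroup.
\item $U_m$ preserves each $D_{\rho'}$ with $\rho'\neq\rho$, while $U_mx\not\subseteq D_\rho$. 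Hence a point of $U_mx$ off $D_\rho$ lies in exactly the $D_{\rho'}$ with $\rho'\in\sigma'(1)$, and therefore in $Y_{\sigma'}$.
\end{enumerate}
Without this input, or a genuine substitute, $(3)\Rightarrow(2)$ is not established.

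\textbf{Smaller error in $(1)\Rightarrow(3)$.} Your claim ``if every $B$-eigen-section vanishes on the closed orbit in $\overline{Gx}$, every section vanishes at $x$'' is false as stated. It would give $\Gamma(Gx)\subseteq\Gamma(Y_0)$ for the closed orbit $Y_0\subseteq\overline{Gx}$, whereas in general $\Gamma(Y_0)\subsetneq\Gamma(Gx)$. For the space of complete conics, take $x$ in the open orbit and $\mathcal L=\cO_X(E_1)$: the canonical section is nonzero at $x$, but the computation in the paper's final example shows $[E_1]\notin\Gamma(Y_{\sigma_{12}})$.

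The correct version uses $Gx$ itself. Replace $G$ by a finite cover so that $\mathcal L$ is $G$-linearized. If all $B$-eigen-sections vanish on the $G$-stable set $Gx$, then so do all their $G$-translates, and these span $H^0(X,\mathcal L)$. Neither closed orbits nor toroidality play any role here. The paper phrases this as choosing a simple submodule on which restriction to $Gx$ is injective and taking its highest weight vector.
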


Theorem~\ref{thm:affine} gives a partial answer to~\cite[Problem~2]{Sh}, which asks for a description of the $\Aut(X)$-orbits on affine spherical varieties. It also answers the question raised in~\cite[Remark~5.4]{AB} for affine toroidal spherical varieties: for such varieties, conditions~1 and~3 of~\cite[Theorem~5.1]{AB} are equivalent, i.e. two points $x,x'\in X$ lie in the same $\Aut^0(X)$-orbit if and only if $\Cl_x(X)=\Cl_{x'}(X)$. For horospherical varieties, a necessary condition of a similar type in terms of the divisor class group was obtained in~\cite{Gai}, and the $\Aut(X)$-orbits were described in~\cite{BGS} in terms of degrees of homogeneous locally nilpotent derivations. In~\cite{AZ25} it is shown that in a quasiaffine spherical variety every $G$-orbit contained in the regular locus can be connected by a $B$-root subgroup with any minimal $G$-orbit containing it in its closure; see also~\cite{AZ23}.

The text consists of three parts. In Section~\ref{sec:prelim}, we fix the notation and recall some basic facts about spherical varieties. In Section~\ref{sec:affine}, we prove the necessary facts about local divisor class groups and establish the affine criterion. In Section~\ref{sec:complete}, we construct $B$-root subgroups that realize transitions between $G$-orbits and deduce from this the criterion in the complete case.

\subsection*{Acknowledgments}
The authors are grateful to Anton Shafarevich for his help with the material of this paper and to Dmitry Timashev for the excellent advanced course on spherical varieties that enabled the authors to begin their study of this theory.

\section{Basic facts about spherical varieties}\label{sec:prelim}

In this section we recall some basic facts about spherical varieties. All the following statements can be found in~\cite{Tim11}.

We fix a connected reductive group $G$, a Borel subgroup $B\subseteq G$, and a maximal torus $T\subseteq B$. Write $\mathfrak X(T)$ for the character lattice of $T$, identified with $\mathfrak X(B)$.

Let $X$ be a spherical $G$-variety. The weight lattice of $X$ is
\[
 M=\{m\in\mathfrak X(B)\mid \K(X)^{(B)}_m\ne 0\},
 \qquad N=\Hom(M,\Z),
\]
where $\K(X)^{(B)}_m$ is the space of rational $B$-semiinvariants of weight $m$. Every such space is one-dimensional if it is nonzero, since $B$ has an open orbit. For each $m\in M$, choose a nonzero element $f_m\in\K(X)^{(B)}_m$. Put $M_\Q=M\otimes_\Z\Q$ and $N_\Q=N\otimes_\Z\Q$.

Denote by $\cD^B$ the set of $B$-stable prime divisors in $X$ and by $\cD^G$ its subset of $G$-stable prime divisors. The $B$-stable but not $G$-stable prime divisors, i.e. the elements of the set
\[
 \cD=\cD^B\setminus\cD^G,
\]
are called \emph{colors}.

For $D\in\cD^B$, define $v_D\in N$ by the following formula:
\[
 \pair{v_D}{m}=\ord_D(f_m).
\]
As we mentioned above, $\K(X)_m^{(B)}$ is one-dimensional and therefore this formula does not depend on the choice of $f_m$. Any $G$-invariant $\Q$-valuation~$v$ of $\K(X)$ defines the element $\overline{v} \in N_\Q$ by restricting $v$ to $M$ and extending to $M_\Q$. The set of all such $\overline{v}$ forms the valuation cone $\mathcal V\subseteq N_\Q$. One of the main facts of this theory is that any $G$-invariant valuation $v$ is uniquely determined by $\overline{v}$.

For a $G$-orbit $Y$, let
\[
 X_Y=\{x\in X\mid Y\subseteq\overline{Gx}\},
 \qquad
 X_Y^0=X_Y\setminus\bigcup_{\substack{F\in\cD\\Y\not\subseteq F}}F.
\]
The variety $X_Y$ is an open spherical subvariety with a unique closed orbit $Y$. A spherical variety with a unique closed orbit is called~\emph{simple}. The set $X_Y^0$ is affine and is called the \emph{open $B$-chart} of $X_Y$ or of the orbit~$Y$.

Fix the orbit $Y$ and put $P = \{g\in G  \ | \ gX_Y^0 = X_Y^0\}$. Since $P$ contains $B$, it is a parabolic subgroup and therefore $P=P_u \rtimes L$, where $P_u$ is the unipotent radical of~$P$ and~$L$ is the Levi subgroup of~$P$ containing~$T$. The following theorem is one of the central theorems in this theory.

\begin{theorem}[{The Local Structure Theorem~\cite{BLV}, \cite[Section~1.5]{BL}, \cite[Theorem~15.17]{Tim11}}]\label{LST}
    In the notation above there exists a closed $L$-invariant subvariety $Z_Y \subset X_Y^0$ such that: \begin{enumerate}
        \item $X_Y^0 = P\cdot Z_Y \simeq P\times^L Z_Y \simeq P_u \times Z_Y$;
        \item $Z_Y$ is an affine $L$-spherical variety.
    \end{enumerate}
    Moreover, there exists a natural bijection between $G$-orbits in $X$ intersecting $X_Y^0$ and $L$-orbits in $Z_Y$, established as follows: $Y'\mapsto Y'\cap Z_Y$.
\end{theorem}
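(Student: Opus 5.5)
The plan is the standard Brion--Luna--Vust argument: find a $B$-semi-invariant function whose non-vanishing locus is exactly $X_Y^0$, use the $P$-action on the fibers of a projection to a flag variety to split off $P_u$, and then check the orbit correspondence on $Z_Y$.

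\textbf{Step 1: a semi-invariant function cutting out the chart.} First replace $X$ by the simple variety $X_Y$. Since $X_Y$ is quasi-projective and normal, it embeds $G$-equivariantly into $\PP(V)$ for some finite-dimensional $G$-module $V$. Choose a $B$-eigenvector $\omega\in V^*$ such that the hyperplane section $\{\omega=0\}$ meets $X_Y$ exactly in
\[
\bigcup_{F\in\cD,\ Y\not\subseteq F}F .
\]
Such an $\omega$ exists because these colors are the $B$-stable divisors not containing $Y$; after passing to a multiple, the divisor $\sum_{F\in\cD,\ Y\not\subseteq F}F$ is Cartier and generated by global sections. Then
\[
X_Y^0=X_Y\cap\{\omega\ne0\}
\]
is affine, and its stabilizer is $P=G_{[\omega]}$, which is parabolic because it contains $B$.

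\textbf{Step 2: the projection to the flag variety.} Consider the orbit map $P_u\to G[\omega]\subset\PP(V^*)$. Using the $\Lie(T)$-weight decomposition of $V$, construct a $P$-equivariant morphism
\[
\phi\colon X_Y^0\to P_u\cong P/L\cong P\cdot\langle\omega\rangle\subset\PP(V^*)
\]
of the form $x\mapsto$ the point in the $P$-orbit determined by $\langle\mathfrak g\,\omega,x\rangle$. The first thing to try is the standard moment-type map: send $x$ to the element $\xi\in\mathfrak g$ orthogonal to $\Lie P$, determined by $\langle\xi\omega,x\rangle/\langle\omega,x\rangle$, and use the fact that $P_u\to P_u\xi_0$ is an isomorphism onto an affine orbit. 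Set $Z_Y=\phi^{-1}(eL)$. This fiber is closed, $L$-stable and affine, and equivariance gives
\[
X_Y^0\cong P\times^L Z_Y\cong P_u\times Z_Y .
\]

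\textbf{Step 3: $Z_Y$ is spherical and the orbits correspond.} The group $B\cap L$ has an open orbit in $Z_Y$, because $B$ has an open orbit in $X_Y^0\cong P_u\times Z_Y$ and $P_u\subset B$. Normality of $Z_Y$ follows from the product decomposition. For the orbit statement, every $G$-orbit $Y'$ meeting $X_Y^0$ satisfies $Y'\cap X_Y^0=P_u\times(Y'\cap Z_Y)$, and this set is $P$-stable. It remains to show that $Y'\cap X_Y^0$ is a single $P$-orbit. Equivalently, since $X_Y^0$ contains only finitely many $G$-orbits, we must show that each $G$-orbit meets $X_Y^0$ in exactly one $P$-orbit. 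The plan is to deduce this from $Y'\cap X_Y^0$ being a $B$-stable open subset of $Y'$ that is the complement of colors of $Y'$; then use $Gy\cap X_Y^0=P\cdot(\text{open $B$-orbit})$, arguing via the $P$-stable open subset of the homogeneous space $Y'$.

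\textbf{Main obstacle.} I expect Step 2 to be the hardest: constructing $\phi$ correctly and showing that it is an isomorphism onto $P_u$. It needs the characteristic-zero Lie algebra argument that $P_u\cdot\xi_0$ is closed and $P_u$ acts freely on it.
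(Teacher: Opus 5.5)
Your Steps 1--2 follow the classical Brion--Luna--Vust moment-map argument, which is the route taken in the sources the paper cites; the paper itself gives no proof. The genuine gap is in Step 3. You reduce the orbit statement to the claim that every $G$-orbit $Y'$ meeting $X_Y^0$ meets it in a single $P$-orbit. This claim is false when $Y$ is not toroidal. So no argument can work that uses only the fact that $Y'\cap X_Y^0$ is a $P$-stable open subset of $Y'$ cut out by colors.

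\textbf{Counterexample.} Let $G=\mathrm{GL}_2\times\Gm$ act on $X=\A^2\times\A^2$ by $(g,t)\cdot(v_1,v_2)=(gv_1,t\,gv_2)$, let $B$ be the upper triangular matrices times $\Gm$, and write $v_i=(v_{i1},v_{i2})$. The $B$-stable prime divisors are the colors $F_1=\{v_{12}=0\}$ and $F_2=\{v_{22}=0\}$, and the $G$-stable divisor $\{\det(v_1,v_2)=0\}$. Take $Y=\{v_1=0,\ v_2\neq0\}$. Then:
\begin{itemize}
\item $Y\subseteq F_1$ and $Y\not\subseteq F_2$;
\item $X_Y=\{v_2\neq0\}$ consists of three $G$-orbits;
\item $X_Y^0=\{v_{22}\neq0\}$, $P=B$ and $L=T$;
\item one may take $Z_Y=\{v_{21}=0,\ v_{22}\neq0\}$, on which $(\mathrm{diag}(t_1,t_2),t)\in T$ acts by $(v_{11},v_{12},v_{22})\mapsto(t_1v_{11},t_2v_{12},tt_2v_{22})$, giving four orbits.
\end{itemize}
The open $G$-orbit $\{\det\neq0\}$ meets $Z_Y$ in $\{v_{11}\neq0\}$, which is a union of two $T$-orbits. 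The $L$-orbits in any admissible $Z_Y$ correspond to the $P$-orbits in $X_Y^0\simeq P\times^L Z_Y$, so no other choice of $Z_Y$ helps. The final bijection in the statement is therefore not valid for arbitrary $Y$. It does hold for toroidal $Y$, which is the only case the paper uses (see the paragraph after the theorem).

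\textbf{What the toroidal case needs.} Your plan does not mention the required input. All colors are removed in $X_Y^0$, so the $B_L$-stable prime divisors of $Z_Y$ (with $B_L=B\cap L$) are exactly the traces of the $G$-stable prime divisors containing $Y$, and these traces are $L$-stable. Hence every $B_L$-eigenfunction in $\K[Z_Y]$ is an $L$-eigenfunction: its $L$-translates have the same divisor, so by Rosenlicht's theorem it is $L$-semi-invariant. It follows that every simple $L$-submodule of $\K[Z_Y]$ is one-dimensional and $[L,L]$ acts trivially. So $Z_Y$ is an affine toric variety whose invariant prime divisors are these traces. Orbits of a normal toric variety are separated by the set of invariant prime divisors containing them. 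All points of a $G$-orbit $Y'$ lie in the same $G$-stable divisors, so $Y'\cap Z_Y$ is a single orbit.

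\textbf{Two smaller points.}
\begin{itemize}
\item In Step 1, $X_Y\cap\{\omega\neq0\}$ is a priori only quasi-affine, since $X_Y$ need not be closed in $\PP(V)$. Affineness of $X_Y^0$ must be obtained separately; the paper takes it as known.
\item In Step 2, the target of $\phi$ is not $P\cdot[\omega]\subset\PP(V^*)$, which is a point because $P=G_{[\omega]}$. It is the affine space $\lambda+\mathfrak p^\perp=P_u\cdot\lambda\subset\mathfrak g^*$, where $\lambda$ is the weight of $\omega$. The group $P_u$ acts freely on it because $\mathfrak z_{\mathfrak g}(\lambda)=\mathfrak l$.
\end{itemize}
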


When $Y$ is toroidal, it turns out that $[L,L]$ acts trivially on $Z_Y$ and hence $Z_Y$ is an affine toric variety with respect to the quotient of the torus $T'=L/[L,L]$ by the kernel of its action. The $G$-orbits meeting $X_Y^0$ correspond to the $T'$-orbits in $Z_Y$, and
\[
 Y\cap X_Y^0=P_u\times O_Y,
\]
where $O_Y$ is the unique closed $T'$-orbit in $Z_Y$.

The following lemma shows that in the case of an affine spherical variety, the open $B$-chart of an orbit $Y$ is not only an open affine subset but also principal.

\begin{lemma}\cite[Proposition 1]{Sh}\label{lem:principal-chart}
Let $X$ be an affine spherical $G$-variety and let $Y\subseteq X$ be a $G$-orbit. There exists a nonzero regular $B$-semiinvariant $f_Y\in\K[X]$ such that
\[
 X_Y^0=X_{f_Y} = X\setminus \{f_Y=0\}.
\]
\end{lemma}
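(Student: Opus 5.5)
Since $X$ is affine, the plan is to find a regular $B$-semiinvariant whose zero locus is exactly $X\setminus X_Y^0$. By definition,
\[
X\setminus X_Y^0=(X\setminus X_Y)\cup\bigcup_{F\in\cD,\,Y\not\subseteq F}F .
\]
First I will show that this complement is a union of $B$-stable prime divisors, namely those not containing $Y$. The colors not containing $Y$ are such divisors by definition. For the part $X\setminus X_Y$, I will use that $X_Y^0$ is affine (recalled above) and open in the normal variety $X$. The complement of an affine open subset of a normal variety is of pure codimension one, so $X\setminus X_Y^0=\bigcup_{D\in\cD^B(Y)}D$ as sets. Here any $B$-stable prime divisor meeting $X_Y^0$ must contain $Y$, because $Y$ is the unique closed orbit of the simple variety $X_Y$; conversely, a $B$-stable divisor that contains $Y$ meets $X_Y^0\supseteq Y\cap X_Y^0\ne\emptyset$.

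Next, consider the ideal $I\subseteq\K[X]$ of the closed $B$-stable subset $X\setminus X_Y^0$. It is nonzero and $B$-stable, so by the Lie–Kolchin theorem it contains a nonzero $B$-semiinvariant $f$. Indeed, the $B$-module generated by any nonzero element of $I$ is finite-dimensional, so $B$ has an eigenvector in it. Then $\{f=0\}\supseteq X\setminus X_Y^0$. The divisor of $f$ is $B$-stable, so it is supported on $B$-stable prime divisors. I must rule out components of $\divisor f$ that contain $Y$, i.e. I need $f$ to be invertible on $X_Y^0$.

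This is the main obstacle, and I plan to handle it by choosing $f$ carefully. Let $D_1,\dots,D_k$ be the elements of $\cD^B(Y)$. For each $i$, take a $B$-semiinvariant $f_i\in I(D_i)$. If $f_i$ vanishes on a $B$-stable divisor $E\ni Y$, then $f_i$ vanishes on $Y$, because $Y\subseteq E$. Since $f_i$ is a $B$-semiinvariant, it then vanishes on $\overline{BY}=\overline Y$. The aim is therefore to pick semiinvariants nonvanishing on $Y$ but vanishing on $D_i$.

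Since $Y\not\subseteq D_i$ and both are closed and $B$-stable, the ideal $I(D_i)$ is not contained in $I(\overline Y)$. The subspace $I(D_i)\cap I(\overline Y)$ is a proper $B$-submodule. Taking the quotient and applying Lie–Kolchin there gives a semiinvariant modulo $I(\overline Y)$, but not necessarily a semiinvariant in $\K[X]$. To lift it, I will use that $\K[X]$ is a multiplicity-free-type rational $G$-module. More precisely, I will use the $B$-eigenvector structure: $I(D_i)$ decomposes into $G$-isotypic pieces, and $G$-stability applies for the $G$-divisors. For colors I would instead argue via highest weight vectors, passing to $U$-invariants $\K[X]^U$. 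The map $I(D_i)^U\to(\K[X]/I(\overline Y))^U$ is surjective because taking $U$-invariants is exact on rational $G$-modules when the ideals are $G$-stable. Since $D_i$ need not be $G$-stable, I would instead replace $I(D_i)$ by $\sum_{g\in G}g\,I(D_i)$ intersected appropriately. If this fails, I would fall back on the Local Structure Theorem to show that $X_Y^0$ is exactly the non-vanishing locus of the product $f=f_1\cdots f_k$.

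With such $f_i$, $f=\prod f_i$ vanishes exactly on $\bigcup D_i=X\setminus X_Y^0$, and this gives $X_Y^0=X_f$.
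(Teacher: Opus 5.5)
You have a genuine gap at the decisive step. Your reduction is sound: once $X\setminus X_Y^0=\bigcup_{D\in\cD^B(Y)}D$, it suffices to find, for each $D\in\cD^B(Y)$, a regular $B$-semiinvariant vanishing on $D$ but not identically on $Y$. For $G$-stable $D$ such a function exists: take a highest weight vector in a $G$-stable complement of $I(D)\cap I(\overline Y)$ in $I(D)$.

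The gap is the case of a color $F\in\cD^B(Y)$. This case is the real content of the lemma, since for toroidal $X$ every color lies in $\cD^B(Y)$. None of your suggestions handles it:
\begin{itemize}
\item $I(F)$ is only $B$-stable. It does not split along isotypic components, so exactness of $U$-invariants says nothing about it.
\item The surjectivity $I(D_i)^U\to(\K[X]/I(\overline Y))^U$ you assert is false even for $G$-stable $D_i$. The class of $1$ is never hit, because $D_i\cap\overline Y\neq\varnothing$. What holds in the $G$-stable case is surjectivity onto $\bigl(I(D_i)/(I(D_i)\cap I(\overline Y))\bigr)^U$.
\item $\sum_{g}g\,I(F)$ is the ideal of the largest $G$-stable closed subset of $F$, so its elements need not vanish on $F$. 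For toroidal $X$ it is the unit ideal.
\item ``Fall back on the Local Structure Theorem'' is not an argument.
\end{itemize}
Moreover, no argument using only the $B$-module structure and multiplicity-freeness can work. In the multiplicity-free $\SL_2$-module $\mathfrak{sl}_2\oplus\K$, the $B$-submodule $\langle e,h+1\rangle$ is not contained in the $G$-submodule $\mathfrak{sl}_2$, yet its only $B$-eigenvectors are the multiples of $e$. So some global input about $X$ is unavoidable.

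The missing input is combinatorial. Since $X$ is normal and $\divisor f_m=\sum_{D\in\cD^B}\pair{v_D}{m}D$, you need $m\in M$ such that $\pair{v_D}{m}\ge 0$ for all $D\in\cD^B$, with equality exactly when $D\supseteq Y$. Then $f_m\in\K[X]$, and by your first step $X_{f_m}=X_Y^0$.

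Such an $m$ comes from Luna--Vust theory. Let $(\mathcal C_X,\cD_X)$ be the colored cone of the simple variety $X$, where $\cD_X$ is the set of colors containing the closed orbit. The orbit $Y$ corresponds to a face $\tau$ of $\mathcal C_X$. A $G$-stable divisor or a color in $\cD_X$ contains $Y$ if and only if its vector lies in $\tau$. Affineness of $X$ gives, by Knop's criterion, some $\chi\in M$ with $\chi|_{\mathcal C_X}=0$ and $\pair{v_F}{\chi}>0$ for every color $F\notin\cD_X$. Now choose $\mu\in M\cap\mathcal C_X^\vee$ with $\mathcal C_X\cap\mu^\perp=\tau$. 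Then $m=\mu+N\chi$ with $N\gg 0$ works. The term $N\chi$ is precisely what controls the colors not containing the closed orbit, which your approach cannot reach.
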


This property allows us to pass from $\Ga$-actions on $X_Y^0$ to actions on $X$, using Lemma~\ref{lem:replica} below.

\section{The affine case}\label{sec:affine}

\subsection{Local divisor class groups} Here we introduce the local divisor class group and prove the statements about it needed for the affine criterion.

\begin{definition}
    Let $X$ be a normal variety and $\operatorname{WDiv}(X)$ be the group of Weil divisors on~$X$. For $x\in X$ the group \begin{equation*}
        \Cl(X,x)=\operatorname{WDiv}(X)/\operatorname{LPDiv}(X,x),
    \end{equation*}
    where $\operatorname{LPDiv}(X,x)$ is the group of Weil divisors that are principal in some neighborhood of $x$, is called~\emph{the local divisor class group at the point $x$}.
\end{definition}

 Denote the kernel of the natural projection $\Cl(X)\to\Cl(X,x)$ by $\Cl_x(X)$. A class $[D]$ belongs to $\Cl_x(X)$ if and only if there is a divisor $D'\sim D$ whose support does not contain $x$. Indeed, if $D=\divisor(f)$ near $x$, then $D'=D-\divisor(f)$ is such a representative and vice versa. This observation makes the following proposition obvious.

\begin{proposition}\label{prop:locact}
    Let $X$ be a normal variety and $\varphi \in \Aut(X)$. Then for any $x\in X$ the following equality holds: \begin{equation*}
        \Cl_{\varphi(x)}(X) = \overline{\varphi}(\Cl_x(X)),
    \end{equation*}
    where $\overline{\varphi}$ denotes the image of $\varphi$ in $\Aut(\Cl(X))$, i.e. $\overline{\varphi}([D])=[\varphi(D)]$.
\end{proposition}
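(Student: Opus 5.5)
The plan is to use the characterization of $\Cl_x(X)$ stated just before the proposition: a class $c\in\Cl(X)$ lies in $\Cl_x(X)$ if and only if $c=[D']$ for some Weil divisor $D'$ whose support does not contain $x$. Together with the facts that an automorphism maps prime divisors to prime divisors and principal divisors to principal divisors, this gives the equality by two inclusions.

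First I check that $\overline{\varphi}$ is a well-defined automorphism of $\Cl(X)$. Since $\varphi$ is an automorphism of the normal variety $X$, it induces a bijection on prime divisors. Extending linearly gives an automorphism of $\operatorname{WDiv}(X)$. It preserves principal divisors, because $\varphi(\divisor(f))=\divisor(f\circ\varphi^{-1})$: the valuation $\ord_{\varphi(D)}$ is carried to $\ord_D$ under pullback by $\varphi$. Hence it descends to $\Cl(X)$, with inverse $\overline{\varphi^{-1}}$.

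For the inclusion $\overline{\varphi}(\Cl_x(X))\subseteq\Cl_{\varphi(x)}(X)$, take $c\in\Cl_x(X)$ and a representative $D'$ with $x\notin\supp D'$. Then $\overline{\varphi}(c)=[\varphi(D')]$. Since $\supp\varphi(D')=\varphi(\supp D')$, this support does not contain $\varphi(x)$, so $\overline{\varphi}(c)\in\Cl_{\varphi(x)}(X)$. The reverse inclusion follows by applying the same argument to $\varphi^{-1}$ and the point $\varphi(x)$, which gives $\overline{\varphi^{-1}}(\Cl_{\varphi(x)}(X))\subseteq\Cl_x(X)$, and then applying $\overline{\varphi}$ to both sides.

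There is no real obstacle here. The only point needing care is the identity $\varphi(\divisor f)=\divisor(f\circ\varphi^{-1})$, which ensures that $\overline{\varphi}$ is well defined. It is immediate from $\ord_{\varphi(D)}(g)=\ord_D(g\circ\varphi)$.
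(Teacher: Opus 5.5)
Your proof is correct and is the argument the paper has in mind: the paper calls the proposition obvious once $\Cl_x(X)$ is characterized as the classes having a representative whose support avoids $x$. You transport such a representative by $\varphi$, and you get the reverse inclusion from $\varphi^{-1}$. Your check that $\overline{\varphi}$ is well defined, via $\varphi(\divisor f)=\divisor(f\circ\varphi^{-1})$, fills in a detail the paper leaves implicit.
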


The next obvious lemma shows that the introduced group is indeed local, i.e. it does not depend on the choice of the neighborhood of the point.

\begin{lemma}\label{lem:open}
Let $X$ be a normal variety, let $U\subseteq X$ be an open subset, and let $x\in U$. There is a commutative diagram
\[
\begin{CD}
\Cl(X) @>>> \Cl(X,x)\\
@V{\pi_U}VV @VV{\pi_{U,x}}V\\
\Cl(U) @>>> \Cl(U,x),
\end{CD}
\]
where the vertical arrows send the class of $D$ to the class of $D|_U$. All the arrows are surjective, and $\pi_{U,x}$ is an isomorphism. In particular,
\[
 \Cl_x(X)=\pi_U^{-1}\bigl(\Cl_x(U)\bigr).
\]
\end{lemma}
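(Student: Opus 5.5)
The plan is to check directly that restriction of Weil divisors $D\mapsto D|_U$ is compatible with both quotients, and then to verify the claims one at a time: well-definedness (which gives commutativity), surjectivity, injectivity of $\pi_{U,x}$, and finally the equality of kernels.

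\emph{Well-definedness and commutativity.} Restriction $\operatorname{WDiv}(X)\to\operatorname{WDiv}(U)$ is a group homomorphism. It sends principal divisors to principal divisors, since $\divisor_X(f)|_U=\divisor_U(f)$ because $\K(U)=\K(X)$. It also sends $\operatorname{LPDiv}(X,x)$ into $\operatorname{LPDiv}(U,x)$: if $D=\divisor(f)$ on a neighborhood $V$ of $x$, then $D|_U=\divisor(f)$ on the neighborhood $V\cap U$ of $x$. Hence the vertical maps $\pi_U$ and $\pi_{U,x}$ are well defined. Principal divisors lie in $\operatorname{LPDiv}$, so the horizontal maps are well defined too. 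The diagram commutes because every map is induced by the identity or by restriction on the level of divisors.

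\emph{Surjectivity.} The horizontal arrows are quotient maps, so they are surjective. Restriction $\operatorname{WDiv}(X)\to\operatorname{WDiv}(U)$ is surjective: take the closure in $X$ of each prime divisor of $U$. Therefore both vertical arrows are surjective.

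\emph{Injectivity of $\pi_{U,x}$.} Suppose $D|_U\in\operatorname{LPDiv}(U,x)$, that is, $D|_U=\divisor(f)$ on some open $V\subseteq U$ containing $x$. Since $V$ is also an open neighborhood of $x$ in $X$ and $D|_V=(D|_U)|_V$, we get $D\in\operatorname{LPDiv}(X,x)$. So $\pi_{U,x}$ is an isomorphism.

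\emph{The equality of kernels.} This is a diagram chase. Let $[D]\in\Cl(X)$. Then $[D]\in\Cl_x(X)$ if and only if its image in $\Cl(X,x)$ is zero. Since $\pi_{U,x}$ is injective, this holds if and only if the image in $\Cl(U,x)$ is zero. By commutativity, that is the image of $\pi_U([D])$, so the condition is equivalent to $\pi_U([D])\in\Cl_x(U)$.

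There is no real obstacle. The only point needing care is that the neighborhoods in the definition of $\operatorname{LPDiv}$ can be shrunk into $U$, which is exactly what makes $\pi_{U,x}$ injective.
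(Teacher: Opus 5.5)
Your proposal is correct and follows the paper's proof essentially step for step. The shared steps are: well-definedness by intersecting the neighborhood with $U$; surjectivity from the quotient maps together with surjectivity of restricting Weil divisors; and injectivity of $\pi_{U,x}$ because a neighborhood $V\subseteq U$ of $x$ is also a neighborhood in $X$. The paper phrases that last point as $D-\divisor(f)$ having support disjoint from $V$, and your explicit diagram chase for the kernel equality is what the paper's ``in particular'' leaves implicit.
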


\begin{proof} The horizontal arrows are surjective by definition. Now let us check that $\pi_{U,x}$ is well-defined and is an isomorphism, the rest is obvious.

\emph{Well-definedness.} If $D=\divisor(f)$ on a neighborhood $W$ of $x$ in $X$, then $D|_U=\divisor(f)$ on~$U\cap W$.

\emph{Injectivity.} Suppose that $[D|_U]=0$ in $\Cl(U,x)$. Then $D|_U=\divisor(f)$ in some neighborhood $W\subseteq U$ of $x$, where $f$ is a nonzero rational function on $X$. The divisor $D-\divisor(f)$ on $X$ has support disjoint from $W$, which yields $[D]=0$ in $\Cl(X,x)$.

\emph{Surjectivity.} This follows from the surjectivity of $\pi_U$.
\end{proof}

\begin{corollary}\label{cor:open-kernels}
In the notation of Lemma~\ref{lem:open}, let $D_1,\ldots,D_k$ be the prime divisors lying in the complement of $U$. Then there is an exact sequence
\[
 \bigoplus_{i=1}^k\Z[D_i]\longrightarrow\Cl_x(X)
 \xrightarrow{\pi_U}\Cl_x(U)\longrightarrow 0.
\]
\end{corollary}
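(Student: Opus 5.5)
We need the exact sequence $\bigoplus_i\Z[D_i]\to\Cl_x(X)\xrightarrow{\pi_U}\Cl_x(U)\to 0$, where the first arrow sends $D_i$ to its class $[D_i]\in\Cl(X)$. The plan is to combine the standard excision sequence for divisor class groups with the identity $\Cl_x(X)=\pi_U^{-1}(\Cl_x(U))$ from Lemma~\ref{lem:open}. Recall the classical exact sequence
\[
 \bigoplus_{i=1}^k\Z[D_i]\longrightarrow\Cl(X)\xrightarrow{\pi_U}\Cl(U)\longrightarrow 0 .
\]
Restriction $D\mapsto D|_U$ is surjective on Weil divisors, since the closure of a prime divisor of $U$ is a prime divisor of $X$. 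A class $[D]$ maps to zero exactly when $D|_U=\divisor(f)|_U$ for some rational $f$. In that case $D-\divisor(f)$ is supported on $X\setminus U$, hence is an integral combination of the $D_i$.

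\emph{First arrow lands in $\Cl_x(X)$.} Since $x\in U$ and each $D_i$ lies in $X\setminus U$, no $D_i$ contains $x$. By the criterion stated before Proposition~\ref{prop:locact}, a class with a representative whose support misses $x$ lies in $\Cl_x(X)$. Hence $[D_i]\in\Cl_x(X)$.

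\emph{Surjectivity of $\pi_U\colon\Cl_x(X)\to\Cl_x(U)$.} Let $c\in\Cl_x(U)$. Choose any preimage $\tilde c\in\Cl(X)$, which exists because $\pi_U$ is surjective on $\Cl(X)$. By Lemma~\ref{lem:open}, $\tilde c\in\pi_U^{-1}(\Cl_x(U))=\Cl_x(X)$.

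\emph{Exactness in the middle.} The composition $\bigoplus_i\Z[D_i]\to\Cl_x(X)\to\Cl_x(U)$ is zero, because $D_i|_U=0$. Conversely, take $c\in\Cl_x(X)$ with $\pi_U(c)=0$. By the exactness of the classical sequence, $c$ is a $\Z$-combination of the $[D_i]$.

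No step is a real obstacle. The only point needing a word is the classical excision sequence, which is standard (Hartshorne, Prop.~II.6.5), and Lemma~\ref{lem:open} does the remaining work.
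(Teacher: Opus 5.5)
Your proof is correct and follows the paper's approach. The paper also combines the standard fact $\ker\pi_U=\sum_{i=1}^k\Z[D_i]$ with the identity $\Cl_x(X)=\pi_U^{-1}(\Cl_x(U))$ from Lemma~\ref{lem:open}; it simply leaves implicit the checks you spell out: that the $[D_i]$ lie in $\Cl_x(X)$, surjectivity, and exactness in the middle.
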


\begin{proof}
It is well known that $\ker\pi_U=\sum_{i=1}^k\Z[D_i]$. The statement immediately follows from this fact and Lemma~\ref{lem:open}.
\end{proof}

The following lemma shows that the affine space factor does not affect the local divisor class group as it does not affect the usual class group. It is important when we use the Local Structure Theorem, which yields the unipotent group as a factor.

\begin{lemma}\label{lem:product}
Let $X$ be a normal variety and let $\pi_X\colon\A^n\times X\to X$ be the natural projection. For $p\in\A^n$ and $x\in X$, there is a commutative diagram
\[
\begin{CD}
\Cl(X) @>{\pi_X^*}>> \Cl(\A^n\times X)\\
@VVV @VVV\\
\Cl(X,x) @>{\pi_X^*}>> \Cl(\A^n\times X,(p,x)),
\end{CD}
\]
in which both horizontal arrows are isomorphisms. In particular,
\[
 \pi_X^*\bigl(\Cl_x(X)\bigr)=\Cl_{(p,x)}(\A^n\times X).
\]
\end{lemma}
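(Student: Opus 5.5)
The plan is to use the classical fact that $\pi_X^*\colon\Cl(X)\to\Cl(\A^n\times X)$ is an isomorphism for normal $X$. I will then deduce the local statement by comparing kernels through Lemma~\ref{lem:open}.

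\textbf{Step 1: the top arrow.} First I check that the diagram commutes. If $D=\divisor(f)$ on a neighborhood $W$ of $x$, then $\pi_X^*D=\divisor(f\circ\pi_X)$ on $\A^n\times W$, which is a neighborhood of $(p,x)$. So $\pi_X^*$ maps $\operatorname{LPDiv}(X,x)$ into $\operatorname{LPDiv}(\A^n\times X,(p,x))$, and the bottom arrow is well defined. Commutativity is then automatic. The top arrow is the standard isomorphism; by induction one may reduce to $n=1$, where it follows from $\K(X)[t]$ being a UFD together with $\Cl(\A^1\times X)\cong\Cl(X)$ (Hartshorne, II.6.6).

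\textbf{Step 2: the bottom arrow.} Both vertical maps are surjective, so the bottom arrow is surjective. The real content is injectivity, which I expect to be the main point. Suppose $\pi_X^*D$ is principal near $(p,x)$. I may shrink to a neighborhood of the form $\A^n\times W$ only if principality holds on all of it, and a priori it holds only on some open set $V\ni(p,x)$.

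To handle this, apply Lemma~\ref{lem:open} with the open set $U=\A^n\times W$ for any affine neighborhood $W$ of $x$. Let $\Omega=(\A^n\times W)\cap V$. The divisor $\pi_X^*D|_{\A^n\times W}-\divisor(g)$ is supported in the complement of $\Omega$, a closed subset of $\A^n\times W$ that misses $(p,x)$. Its support is a finite union of prime divisors $E_j$ not passing through $(p,x)$.

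\textbf{Step 3: pushing the $E_j$ back to $X$.} Now use the global isomorphism on $\A^n\times W$: every class there is $\pi_W^*$ of a class on $W$. So write $E_j\sim\pi_W^*F_j$. The difficulty is that $F_j$ might pass through $x$. To avoid it, I would use the $\A^n$-translation action. The translation $\tau_a$ acts trivially on $\Cl(\A^n\times W)$ and, by Proposition~\ref{prop:locact}, carries $\Cl_{(p,x)}$ to $\Cl_{(p+a,x)}$. Hence $\Cl_{(p,x)}(\A^n\times W)$ is independent of $p$, and therefore lies in $\Cl_{(q,x)}$ for all $q$.

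A cleaner route is the following. Let $K=\Cl_{(p,x)}(\A^n\times X)\subseteq\Cl(\A^n\times X)$. I need $(\pi_X^*)^{-1}(K)\subseteq\Cl_x(X)$. Take $[D]$ with $\pi_X^*[D]\in K$. Then $\pi_X^*D\sim E$, where $E$ does not contain $(p,x)$. Restrict to the slice $\iota\colon X\to\{p\}\times X$. Since the slice is not contained in $\supp E$, the pullback $\iota^*E$ is a well-defined Weil divisor. This is clear on the smooth locus; for the general normal case, first pass to a neighborhood of $x$ where everything is Cartier, or use that $E$ is locally principal near $(p,x)$ after moving within its class. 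Then $\iota^*$ is a homomorphism on linearly equivalent divisors whose supports avoid the slice, with $\iota^*\pi_X^*D=D$. Hence $D\sim\iota^*E$, and $\iota^*E$ misses $x$, so $[D]\in\Cl_x(X)$. The reverse inclusion follows from commutativity.

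I expect the main obstacle to be justifying the restriction $\iota^*$ on Weil divisors of a merely normal variety. I would handle it via Lemma~\ref{lem:open} applied to the big open set $X_{\mathrm{reg}}$. The trouble is that $x$ may be singular, so the argument must instead use that linear equivalence $\pi_X^*D-E=\divisor(h)$ can be restricted. I would choose $h$ not vanishing identically on the slice by multiplying by a suitable function of $\A^n$; failing that, I would fall back to the translation-invariance argument above.
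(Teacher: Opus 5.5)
Your ``cleaner route'' (write $\pi_X^*D\sim E$ with $(p,x)\notin\supp E$ and restrict to the slice $\{p\}\times X$) is exactly the paper's injectivity argument, where the paper writes $\pi_X^*D=\divisor(f)+E$ and restricts to $X_p$; your well-definedness and surjectivity steps also match the paper, so the detours through $\A^n\times W$ and translations can be dropped. The restriction step you flag needs neither a Cartier hypothesis at $x$ (which is not available in advance) nor any modification of $h$: since $X_p$ is a component of neither $\pi_X^*D$ nor $E$, the function $f$ is a unit at the generic point of $X_p$, and the equality $D=\divisor(f|_{X_p})$ near $x$ can be checked on $X_{\mathrm{reg}}$, where $D$ is locally principal, and then extends across the singular locus because that locus has codimension at least two.
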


\begin{proof}
It is sufficient to prove the statement for $n=1$.

It is well known that the upper arrow is an isomorphism. We now check the lower arrow.

\emph{Well-definedness.} If $D$ is principal on a neighborhood $U$ of $x$, then $\pi_X^*D$ is principal on~$\A^1\times U$.

\emph{Surjectivity.} This follows from the surjectivity of the upper arrow and the two vertical arrows.

\emph{Injectivity.} Suppose that $\pi_X^*[D]=0$ in $\Cl(\A^1\times X,(p,x))$. Then, for a neighborhood $W$ of $(p,x)$ and a nonzero rational function $f\in\K(\A^1\times X)$, we have
\[
 \pi_X^*D=\divisor(f)+E,
 \qquad \supp E\cap W=\varnothing.
\]

Consider the fiber $X_p=\{p\}\times X$. Neither $\pi_X^*D$ nor $E$ contains $X_p$ as a prime component. For $E$ it follows from $W \cap X_p \neq \varnothing$. Thus $f$ has no zeros or poles along $X_p$ and hence~$f|_{X_p} \in \K(X)^\times$. Restricting everything to~$X_p$, we obtain $D = \divisor(f|_{X_p}) + E|_{X_p}$, which yields $x \notin \supp(E|_{X_p})$. Therefore, $D$ is principal in some neighborhood of $x$ and hence~$[D]=0$ in $\Cl(X, x)$.
\end{proof}

\subsection{Identity component of the automorphism group.}\label{sec:aut0} We need to specify the convention for $\Aut^0(X)$ in the case of affine $X$ where it might not admit the structure of an algebraic group.

Let $\Aut^0(X)$ denote the set of automorphisms $\varphi \in \Aut(X)$ such that there exist an irreducible rational curve $C$, a morphism $\psi\colon C\times X \to X$, and points $c_1,c_2\in C$ such that $\psi_c = \psi(c, \cdot)$ is an automorphism of $X$ for any $c\in C$, $\psi_{c_1} = \id_X$ and $\psi_{c_2} = \varphi$. This convention is also used in~\cite[Definition~1.2]{AB}; it is easy to check that $\Aut^0(X)$ is a subgroup of $\Aut(X)$; see~\cite[Section~1]{AB}.

Note that if a connected linear algebraic group acts on $X$, then its image in $\Aut(X)$ lies in $\Aut^0(X)$. Indeed, every element of such a group lies in some Borel subgroup, and a Borel subgroup is isomorphic as a variety to $(\K^\times)^r\times\K^m$, so any two of its points can be connected by a rational curve.

For smooth complete spherical varieties, $\Aut^0(X)$ coincides with the usual identity component $\Aut(X)^0$ of the automorphism group. Indeed, $X$ is rational since it contains an open $B$-orbit, so $\Aut(X)^0$ is a connected linear algebraic group. Hence we have $\Aut(X)^0\subseteq\Aut^0(X)$. Conversely, for $\psi\colon C\times X\to X$ as above, $c\mapsto\psi_c$ is a morphism from $C$ to $\Aut(X)$ whose image is irreducible and contains $\id_X$, and hence $\psi_{c}\in\Aut(X)^0$.

\begin{lemma}\label{lem:trivial}
For a normal variety $X$, the group $\Aut^0(X)$ acts trivially on $\Cl(X)$.
\end{lemma}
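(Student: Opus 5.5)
The plan is to show that for every $\varphi\in\Aut^0(X)$ and every prime divisor $D\subseteq X$ one has $\varphi(D)\sim D$. By definition there are an irreducible rational curve $C$, a morphism $\psi\colon C\times X\to X$ with every $\psi_c$ an automorphism, and points $c_1,c_2$ with $\psi_{c_1}=\id_X$ and $\psi_{c_2}=\varphi$. Replacing $C$ by its normalization, we may assume $C$ is a smooth rational curve. Then $C$ is an open subset of $\PP^1$, and in particular an open subset of $\A^1$ after removing at most one point different from $c_1,c_2$.

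Consider the ``graph'' automorphism $\Psi\colon C\times X\to C\times X$, $(c,x)\mapsto (c,\psi_c^{-1}(x))$. To see that it is a well-defined morphism, I would rather use the subvariety
\[
\mathcal D=\{(c,x)\mid \psi_c^{-1}(x)\in D\}=\{(c,x)\mid x\in\psi_c(D)\}.
\]
This is the image of $C\times D$ under $(c,y)\mapsto (c,\psi(c,y))$. That map is a morphism, and it is injective and bijective on $C\times X$ fiberwise. So $\mathcal D$ is a closed subset of $C\times X$: it is the preimage of $D$ under the map $(c,x)\mapsto\psi_c^{-1}(x)$ if that map is a morphism. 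Avoiding that issue, one can check closedness directly. The map $\Phi\colon(c,y)\mapsto(c,\psi(c,y))$ is a bijective morphism of $C\times X$ onto itself, and $C\times X$ is normal, so by Zariski's main theorem $\Phi$ is an isomorphism. Hence $\mathcal D=\Phi(C\times D)$ is a prime divisor on $C\times X$ whose fiber over each $c$ is $\psi_c(D)$.

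Next, use $\Cl(C\times X)$. Since $C\subseteq\A^1$ is open, the map $\Cl(\A^1\times X)\to\Cl(C\times X)$ is surjective, and $\Cl(X)\cong\Cl(\A^1\times X)$ via $\pi_X^*$ (the upper arrow of Lemma~\ref{lem:product}). So $\pi_X^*\colon\Cl(X)\to\Cl(C\times X)$ is surjective. Choose a divisor $E$ on $X$ and a rational function $h$ on $C\times X$ with $\mathcal D=\pi_X^*E+\divisor(h)+F$, where $F$ is supported on fibers $\{c\}\times X$ for the finitely many removed points $c\notin C$. In fact $F=0$ inside $C\times X$, since we work on $C\times X$ itself and the kernel only involves divisors outside $C\times X$.

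Finally, restrict to the fibers $\{c_i\}\times X$ for $i=1,2$, exactly as in the injectivity part of the proof of Lemma~\ref{lem:product}. No fiber $\{c\}\times X$ is a component of $\mathcal D$ or of $\pi_X^*E$, so $h$ has neither a zero nor a pole along the fiber and restricts to a nonzero rational function $h_c$ on $X$. Restricting gives $\psi_{c}(D)=E+\divisor(h_{c})$ for $c=c_1,c_2$, hence $D\sim E\sim\varphi(D)$. Applying this to all prime divisors shows that $\overline\varphi$ is the identity on $\Cl(X)$.

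The main obstacle is the step showing that $\mathcal D$ is a divisor and that restriction to fibers behaves well, in particular that the pullbacks to fibers are compatible with the restriction of principal divisors. I would handle it via the isomorphism $\Phi$ as above, using Zariski's main theorem, or via the fact that $c\mapsto\psi_c^{-1}$ is given by a morphism.
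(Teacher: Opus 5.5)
Your proof is correct and follows essentially the same route as the paper. Both arguments turn the family into the fibrewise automorphism $\Phi(c,x)=(c,\psi_c(x))$ of $C\times X$ with $C\subseteq\A^1$ open, use that $\Cl(C\times X)$ comes from $\Cl(X)$, and restrict the resulting linear equivalence to the fibres over $c_1$ and $c_2$. The differences are minor: you push $C\times D$ forward by $\Phi$, getting $\varphi(D)\sim D$ directly, whereas the paper pulls $\pi_X^*D$ back by $\Phi$ and then applies the result to $\varphi(D)$; and you justify via Zariski's main theorem that $\Phi$ is an isomorphism, which the paper takes for granted.
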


\begin{proof}
    Let $\varphi$ be an automorphism lying in $\Aut^0(X)$. Consider the rational curve $C$ giving a family of automorphisms containing $\varphi$ and $\id_X$. Replacing $C$ with its normalization and removing a finite number of points we can assume that $C$ is an open subset of $\A^1$.

    Now we have the mapping $\psi:C\times X\to X$, $C = \A^1\setminus\{a_1,\dotsc,a_s\}$, $\psi_{c_1} = \id_X$, $\psi_{c_2}=\varphi$. Consider the projection $\pi_X:C\times X\to X$ and the isomorphism $\Phi:C\times X \to C\times X$ defined as follows: \begin{equation*}
        \Phi(c,x) = (c, \psi_c(x)).
    \end{equation*}

    Now take any divisor $D$ on $X$ and put $\widetilde D = \pi_X^*D$, $\overline{D} = \Phi^*(\widetilde D)$. Since $\Cl(C\times X) \simeq \Cl(X)$, we obtain $\overline{D} \sim \pi_X^* E$ for some divisor $E$ on $X$, i.e. \begin{equation*}
        \overline{D} = \pi_X^*E + \divisor(f(c,x)), \qquad f\in\K(C\times X)^\times.
    \end{equation*}

    For any $c\in C$, neither $\overline{D}$ nor $\pi_X^*E$ has $\{c\}\times X$ as a prime component. Hence $f$ has neither zeros nor poles along $\{c\}\times X$, and $f|_{\{c\}\times X}\in\K(X)^\times$. This means that we can restrict this equality to the fibers with $c = c_1$ and $c=c_2$, which yields~$D\sim E$ and $\varphi^*D \sim E$. Thus~$\varphi^*D\sim D$ for every divisor~$D$. Applying this to~$\varphi(D)$ gives~$\varphi(D)\sim D$. This finishes the proof.
\end{proof}

\begin{corollary}\label{cor:locclconst}
   Let $X$ be a normal variety. If $x$ and $y$ lie in the same orbit of $\Aut^0(X)$, then $\Cl_x(X)=\Cl_y(X)$.
\end{corollary}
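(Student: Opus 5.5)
The corollary follows by combining Proposition~\ref{prop:locact} with Lemma~\ref{lem:trivial}. Suppose $y=\varphi(x)$ for some $\varphi\in\Aut^0(X)$. Proposition~\ref{prop:locact} then gives
\[
\Cl_y(X)=\Cl_{\varphi(x)}(X)=\overline{\varphi}\bigl(\Cl_x(X)\bigr).
\]
The task is therefore to show that $\overline{\varphi}$ is the identity on $\Cl(X)$.

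This is exactly Lemma~\ref{lem:trivial}. For every Weil divisor $D$ it gives $\varphi(D)\sim D$, that is, $\overline{\varphi}([D])=[D]$. Substituting into the displayed equality yields $\Cl_y(X)=\Cl_x(X)$.

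One point needs a check: Proposition~\ref{prop:locact} must use the same map as Lemma~\ref{lem:trivial}, namely $[D]\mapsto[\varphi(D)]$, and not its inverse. Either way there is no difficulty. Lemma~\ref{lem:trivial} covers both $\varphi^*$ and $\varphi_*$, and in any case $\Aut^0(X)$ is a group, so $\varphi^{-1}\in\Aut^0(X)$ acts trivially as well. No step here is an obstacle; all the content lies in Lemma~\ref{lem:trivial}.
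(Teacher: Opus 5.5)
Your proposal is correct and follows the paper's proof exactly: Proposition~\ref{prop:locact} gives $\Cl_{\varphi(x)}(X)=\overline{\varphi}(\Cl_x(X))$, and Lemma~\ref{lem:trivial} shows that $\overline{\varphi}$ is the identity on $\Cl(X)$. Your remark on the direction of the map is also fine, since the lemma's proof explicitly derives $\varphi(D)\sim D$ from $\varphi^*D\sim D$.
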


\begin{proof}
    This follows immediately from Proposition~\ref{prop:locact} and Lemma~\ref{lem:trivial}.
\end{proof}

\subsection{The orbit criterion}

First of all, we recall the results obtained in~\cite{AB}.

For a toric variety $Z$ with the acting torus $\mathbb T$ and a $\mathbb T$-orbit~$O$ put \begin{equation*}
    S_{\mathbb T}(O) = \sum_{\substack{D: \ O \not\subseteq D \\ \text{$D$ is $\mathbb T$-stable}}}\Z[D]  \ \subseteq \ \Cl(Z).
\end{equation*}Let $\operatorname{AT}(Z)$ be the subgroup of $\Aut(Z)$ generated by $\mathbb T$ and the $\mathbb T$-normalized one-parameter unipotent subgroups (also called \emph{root subgroups}). It is obvious from the definition that $\operatorname{AT}(Z) \subseteq \Aut^0(Z)$. The following result describes the orbits of the identity component of the automorphism group of a toric variety.

\begin{theorem}\cite[Theorem~5.1 and Proposition~2.1]{AB}\label{thm:toric}
Let $Z$ be an affine toric variety and let~$z,z'\in Z$. The following conditions are equivalent:
\begin{enumerate}
\item $z$ and $z'$ lie in the same $\Aut^0(Z)$-orbit;
\item $z$ and $z'$ lie in the same $\operatorname{AT}(Z)$-orbit;
\item $\Cl_z(Z)=\Cl_{z'}(Z)$;
\item $S_{\mathbb T}(\mathbb Tz)=S_{\mathbb T}(\mathbb Tz')$.
\end{enumerate}
Moreover, $\Cl_z(Z)=S_{\mathbb T}(\mathbb Tz)$ for every $z\in Z$.
\end{theorem}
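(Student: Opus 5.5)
We prove the equivalences in the order $(2)\Rightarrow(1)\Rightarrow(3)\Leftrightarrow(4)\Rightarrow(2)$, after first establishing the final formula $\Cl_z(Z)=S_{\mathbb T}(\mathbb Tz)$. The implication $(2)\Rightarrow(1)$ holds because $\operatorname{AT}(Z)\subseteq\Aut^0(Z)$. The implication $(1)\Rightarrow(3)$ is Corollary~\ref{cor:locclconst}. Once the formula is known, $(3)\Leftrightarrow(4)$ is immediate. So the real content is the formula and the implication $(4)\Rightarrow(2)$.

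\emph{The formula.} Let $Z=Z_\sigma$, and let $O=\mathbb Tz$ correspond to a face $\tau\preceq\sigma$. Let $U=U_\tau$ be the affine toric chart consisting of the orbits whose closure contains $O$. A $\mathbb T$-stable prime divisor $D_\rho$ contains $O$ if and only if $\rho\in\tau$, and these are exactly the divisors meeting $U$. Hence the prime divisors in $Z\setminus U$ are precisely the generators of $S_{\mathbb T}(O)$.

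By Corollary~\ref{cor:open-kernels}, $S_{\mathbb T}(O)\subseteq\Cl_z(Z)$ and $\Cl_z(Z)/S_{\mathbb T}(O)\cong\Cl_z(U)$. It therefore suffices to show $\Cl_z(U)=0$. Now $U\cong U'\times(\K^\times)^k$, where $U'$ is an affine toric variety with a fixed point $z_0$ and $z\mapsto(z_0,t)$. Every class on $U$ is pulled back from $U'$, and the argument of Lemma~\ref{lem:product} (restriction to a fibre $U'\times\{t\}$) works verbatim with a torus factor in place of $\A^n$. This reduces us to showing $\Cl(U')\to\Cl(U',z_0)$ is injective. Since $\K[U']$ is positively graded with $z_0$ corresponding to the irrelevant maximal ideal, the standard fact that $\Cl(A)\cong\Cl(A_{\mathfrak m})$ for such graded normal rings gives this. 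Alternatively, one can argue directly: if a $\mathbb T$-stable divisor is principal near $z_0$, then it is the divisor of a $\mathbb T$-semiinvariant near $z_0$; any $\mathbb T$-stable open neighbourhood of $z_0$ is all of $U'$, so it is principal on $U'$.

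\emph{The implication $(4)\Rightarrow(2)$.} This is the main obstacle. Let $O_\tau$ and $O_{\tau'}$ be orbits with $S(\tau):=S_{\mathbb T}(O_\tau)=S_{\mathbb T}(O_{\tau'})$. The subgroup $S(\tau)$ is generated by the classes $[D_\rho]$ with $\rho\notin\tau$. I would first reduce to the case $\tau\subsetneq\tau'$ by passing through a suitable common face, and then induct on $\dim\tau'-\dim\tau$.

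In the inductive step, pick a ray $\rho_0\in\tau'\setminus\tau$. Equality of the groups $S$ gives $[D_{\rho_0}]\in S(\tau')$, i.e.
\[
D_{\rho_0}\sim\sum_{\rho\notin\tau'}a_\rho D_\rho .
\]
Translated into the lattice, this yields $e\in M$ with $\pair{\rho_0}{e}=-1$ and $\pair{\rho}{e}=0$ for all other rays of $\tau'$. I would then adjust $e$ by characters vanishing on $\tau'$ so that $\pair{\rho}{e}\ge 0$ on the remaining rays; this should be possible using the freedom in choosing the coefficients $a_\rho$. Such an $e$ is a Demazure root. Its root subgroup $\mathbb G_a$ moves points of $O_{\tau'}$ out of $D_{\rho_0}$ while keeping them in $D_\rho$ for the other $\rho\in\tau'$. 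Consequently it carries $O_{\tau'}$ into the orbit of a smaller face still containing $\tau$.

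Arranging the nonnegativity on rays outside $\tau'$, and checking that the resulting orbit is exactly the next face in the chain, is the delicate combinatorial step. This is where I expect most of the work to lie.
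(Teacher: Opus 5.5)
Your proofs of $(2)\Rightarrow(1)\Rightarrow(3)$ and of the formula $\Cl_z(Z)=S_{\mathbb T}(\mathbb Tz)$ are sound: Corollary~\ref{cor:open-kernels} applied to the chart $U_\tau$, then $\Cl(A)\cong\Cl(A_{\mathfrak m})$ for the positively graded coordinate ring of $U'$. Your plan for $(4)\Rightarrow(2)$ is also the right one. Passing to $\tau\cap\tau'$ is legitimate because $S_{\mathbb T}(O_{\tau\cap\tau'})=S_{\mathbb T}(O_\tau)+S_{\mathbb T}(O_{\tau'})$, and removing rays one at a time with Demazure roots is the toric analogue of Lemma~\ref{lem:transition}. (The paper does not prove this theorem; it quotes it from Arzhantsev--Bazhov.)

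However, the proposal stops at the step that carries the content. You leave as ``should be possible'' both the nonnegativity of $e$ on rays outside $\tau'$ and the identification of the orbit reached by $H_e$, so $(4)\Rightarrow(2)$ is not yet proved.

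For the first point, ``freedom in the $a_\rho$'' is not an argument. What you need is that $\tau'$ is a face of $\sigma$. Then there is $m_0\in\sigma^\vee\cap(\tau')^{\perp}\cap M$ with $\pair{v_\rho}{m_0}>0$ for all $\rho\in\sigma(1)\setminus\tau'(1)$. For $k\gg0$, $e+km_0$ is a Demazure root with distinguished ray $\rho_0$ that still vanishes on $\tau'(1)\setminus\{\rho_0\}$. This is exactly where affineness is used, and it is why the group $S$, rather than the monoid $\Gamma$ of the complete case, is the correct invariant.

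For the second point, do not assume $\tau'$ is simplicial. Since $-e\in(\tau')^{\vee}$, the cone $\tau''=\tau'\cap e^{\perp}=\operatorname{cone}(\tau'(1)\setminus\{\rho_0\})$ is a facet of $\tau'$. It contains $\tau$, and $S_{\mathbb T}(O_\tau)\supseteq S_{\mathbb T}(O_{\tau''})\supseteq S_{\mathbb T}(O_{\tau'})$ forces equality, so the induction continues. Then use the derivation $\partial_e(\chi^m)=\pair{v_{\rho_0}}{m}\chi^{m+e}$:

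(a) For $\rho\ne\rho_0$, $\partial_e$ preserves the ideal of $D_\rho$, spanned by the $\chi^m$ with $\pair{v_\rho}{m}>0$, because $\pair{v_\rho}{e}\ge0$. Hence $H_e$ preserves each such $D_\rho$ and its complement.

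(b) For $u=km_0$ with $k\gg0$ one has $u-e\in\sigma^\vee$, $\pair{v_{\rho_0}}{u-e}=1$, and $\exp(t\partial_e)\chi^{u-e}=\chi^{u-e}+t\chi^u$. This function does not vanish at $x\in O_{\tau'}$ for $t\ne0$, so $H_ex\not\subseteq D_{\rho_0}$.

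Finally, $O_\gamma$ is determined by $\gamma(1)=\{\rho\mid O_\gamma\subseteq D_\rho\}$. So every point of $H_ex\setminus D_{\rho_0}$ lies in $O_{\tau''}$, as in Step~4 of Lemma~\ref{lem:transition}. With these verifications added, your argument becomes a complete proof.
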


Now we show that this result has a direct analog in the case of affine toroidal spherical varieties.

Recall the subgroup $S(Y)\subseteq\Cl(X)$ defined in~\eqref{eq:invariants}. The following proposition holds for any toroidal orbit in any spherical variety.

\begin{proposition}\label{prop:local-spherical}
Let $Y$ be a toroidal $G$-orbit in a spherical variety $X$. Then
\[
 \Cl_y(X)=S(Y)\qquad\text{for every }y\in Y.
\]
\end{proposition}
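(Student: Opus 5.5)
The plan is to reduce to the open $B$-chart $X_Y^0$ and then to the toric variety $Z_Y$, where Theorem~\ref{thm:toric} applies. Fix $y\in Y$. Since $\Cl_y(X)$ depends only on $G$-orbit data up to translation by $G$ (Corollary~\ref{cor:locclconst}, as $G$ maps into $\Aut^0(X)$), we may assume $y\in Y\cap X_Y^0$. Put $U=X_Y^0$. The complement $X\setminus U$ consists of the closed set $X\setminus X_Y$, whose divisorial components are $G$-stable divisors not containing $Y$, together with the colors not containing $Y$. Hence the prime divisors in $X\setminus U$ are exactly the $B$-stable prime divisors not containing $Y$, i.e. the elements of $\cD^B(Y)$. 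Corollary~\ref{cor:open-kernels} then gives an exact sequence
\[
 \bigoplus_{D\in\cD^B(Y)}\Z[D]\longrightarrow\Cl_y(X)\xrightarrow{\pi_U}\Cl_y(U)\longrightarrow 0 .
\]
The inclusion $S(Y)\subseteq\Cl_y(X)$ is immediate, since $D\in\cD^B(Y)$ does not contain $y$. It therefore suffices to show that $\Cl_y(U)=0$, for then $\Cl_y(X)=\ker\pi_U=S(Y)$.

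To compute $\Cl_y(U)$, use the Local Structure Theorem~\ref{LST}: $U\simeq P_u\times Z_Y$, with $P_u\simeq\A^n$ as a variety. Write $y=(p,z)$. Since $Y$ is toroidal, $Z_Y$ is an affine toric variety and $z$ lies in its unique closed orbit $O_Y$. Lemma~\ref{lem:product} gives $\Cl_y(U)\simeq\Cl_z(Z_Y)$ (via $\pi^*$, which is an isomorphism on both $\Cl$ and $\Cl(\cdot,\cdot)$). By Theorem~\ref{thm:toric}, $\Cl_z(Z_Y)=S_{\mathbb T}(O_Y)$, which is generated by the $\mathbb T$-stable prime divisors not containing $O_Y$. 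Every $\mathbb T$-stable prime divisor of an affine toric variety contains the unique closed orbit, so this sum is empty and $\Cl_z(Z_Y)=0$. (Alternatively, $\Cl(Z_Y)$ is generated by the classes of $\mathbb T$-stable divisors, all of which pass through $z$, but the cited theorem already settles this.)

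The main obstacle is the first step: correctly identifying the prime divisors in $X\setminus X_Y^0$ with $\cD^B(Y)$. One has to check two things. First, every $G$-stable prime divisor not containing $Y$ is disjoint from $X_Y$: if $D\cap X_Y\neq\varnothing$, then the closed $G$-stable set $D$ contains some orbit whose closure contains $Y$, hence contains $Y$. Second, no other divisorial components appear, since $X\setminus X_Y$ is a union of $G$-orbit closures. A smaller point is the harmless reduction to $y\in X_Y^0$; in fact $Y\cap X_Y^0\neq\varnothing$ directly, and we may move $y$ by $G$.
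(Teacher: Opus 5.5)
Your proposal is correct and follows the paper's proof essentially step for step. You move $y$ into $Y\cap X_Y^0$ by $G$, get $\Cl_y(X_Y^0)=0$ from Theorem~\ref{thm:toric} together with Lemma~\ref{lem:product}, identify the prime divisors in $X\setminus X_Y^0$ with $\cD^B(Y)$, and conclude with Corollary~\ref{cor:open-kernels}; you only spell out the divisor identification in a bit more detail than the paper does.
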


\begin{proof}
We use the notation of Section~\ref{sec:prelim}. By Corollary~\ref{cor:locclconst}, the kernel $\Cl_y(X)$ is constant along $Y$, since the image of $G$ in $\Aut(X)$ lies in $\Aut^0(X)$. We may therefore choose $y\in Y\cap X_Y^0$ and use Theorem~\ref{LST}. Write $y=(u,z)$, where $z$ belongs to the unique closed torus orbit of $Z_Y$ which we denoted by~$O_Y$. Every torus-invariant prime divisor on $Z_Y$ contains that orbit and hence $S_{T'}(O_Y) = 0$. Theorem~\ref{thm:toric} gives $\Cl_z(Z_Y)=0$, and Lemma~\ref{lem:product} gives $\Cl_y(X_Y^0)=0$ since $X_Y^0 \simeq P_u \times Z_Y$ and $P_u$ is isomorphic to affine space as an algebraic variety.

The prime divisors lying in $X\setminus X_Y^0$ are precisely the $B$-stable prime divisors not containing $Y$. Indeed, the prime divisors contained in $X\setminus X_Y$ are exactly the $G$-stable prime divisors not containing $Y$, and all the colors are removed on passing to $X_Y^0$ since $Y$ is toroidal. Putting $U = X_Y^0$ in Corollary~\ref{cor:open-kernels} we obtain $\Cl_y(X) = S(Y)$.
\end{proof}

The following lemma is well known. Nevertheless, the proof is quite simple so we present it here. 

\begin{lemma}\label{lem:replica}
Let $X$ be an irreducible affine variety and $0\ne f\in\K[X]$. Then for every $\Ga$-subgroup $H\subseteq\Aut(X_f)$ there is a $\Ga$-subgroup $H'\subseteq\Aut(X)$ such that $H'x=Hx$ for all $x\in X_f$.
\end{lemma}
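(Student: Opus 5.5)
The plan is the standard replica argument for locally nilpotent derivations. A $\Ga$-subgroup $H\subseteq\Aut(X_f)$ corresponds to a nonzero locally nilpotent derivation $\partial$ of $\K[X_f]=\K[X]_f$. Concretely, $\partial(a)=\frac{d}{dt}\big|_{t=0}\,(\exp(t\partial))(a)$, and the $H$-orbits are the orbits of $\exp(t\partial)$. First I will show that for a suitable $k\ge 0$ the derivation $\partial'=f^k\partial$ maps $\K[X]$ into $\K[X]$ and is still locally nilpotent on $\K[X]$. Then I will take $H'=\{\exp(t\partial')\}$ and compare orbits.

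\emph{Step 1 (clearing denominators).} Choose generators $a_1,\dots,a_n$ of $\K[X]$. Each $\partial(a_i)$ lies in $\K[X]_f$, so there is $k$ with $f^k\partial(a_i)\in\K[X]$ for all $i$. By the Leibniz rule, $\partial'=f^k\partial$ then maps $\K[X]$ into itself. I also want $\partial(f)=0$, since without it $f^k\partial$ need not be locally nilpotent.

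\emph{Step 2 (the key point: $f\in\ker\partial$).} Since $f$ is invertible in $\K[X]_f$, I use the standard fact that units of a domain lie in the kernel of any locally nilpotent derivation. To see this, write $fg=1$. The degree function $\deg_\partial$ is additive on products and $\deg_\partial(1)=0$, so $\deg_\partial f=\deg_\partial g=0$, that is, $\partial f=0$.

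\emph{Step 3 (local nilpotency).} Since $\partial f=0$, we get $\partial'^{\,m}=f^{km}\partial^m$. Hence $\partial'$ is locally nilpotent on $\K[X]_f$, and so also on the subring $\K[X]$. Therefore $H'=\exp(t\partial')$ is a $\Ga$-subgroup of $\Aut(X)$. Because $\partial'(f)=0$, the group $H'$ preserves $f$ and hence preserves $X_f$.

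\emph{Step 4 (comparing orbits).} On $X_f$, the action of $H'$ is the action of $\exp(t f^k\partial)$. Since $f$ is $\partial$-invariant, we have $\exp(tf^k\partial)(a)=\sum_j t^jf^{kj}\partial^j(a)/j!$. At a point $x\in X_f$ this means the $H'$-orbit map is $s\mapsto \exp(s\partial)\cdot x$ with $s=tf(x)^k$. Since $f(x)\neq 0$, the reparametrization $t\mapsto tf(x)^k$ is a bijection of $\K$, so $H'x=Hx$.

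The main obstacle is Step 2, together with the precise computation in Step 4. Everything rests on the invariance of $f$. Once units are known to be killed by locally nilpotent derivations, the remaining steps are routine.
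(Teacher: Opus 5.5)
Your proposal is correct and follows the paper's argument. Since $f$ is a unit in $\K[X]_f$, the locally nilpotent derivation kills it, so the replica $f^k\partial$ preserves $\K[X]$, stays locally nilpotent, and its $\Ga$-orbits on $X_f$ coincide with those of $H$; you simply spell out the details the paper leaves implicit, namely the degree argument for units and the reparametrization $s=tf(x)^k$ of the orbit map.
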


\begin{proof}
Let $\delta$ be the locally nilpotent derivation of $\K[X_f]=\K[X]_f$ corresponding to $H$. We have $\delta(f)=0$ since $f$ is invertible on $X_f$. Therefore, for big enough $N$, $f^N\delta$ is a locally nilpotent derivation of $\K[X]$ (this is called \emph{a replica of $\delta$}; see~\cite{Fr}). The corresponding $\Ga$-subgroup $H'\subseteq\Aut(X)$ has the same orbits on $X_f$ as $H$.
\end{proof}

Now we are ready to prove the main result of this section.

\begin{theorem}\label{thm:affine}
Let $X$ be an affine toroidal spherical $G$-variety and let $x,x'\in X$. The following conditions are equivalent:
\begin{enumerate}
\item $x$ and $x'$ lie in the same $\Aut^0(X)$-orbit;
\item $\Cl_x(X)=\Cl_{x'}(X)$;
\item $S(Gx)=S(Gx')$.
\end{enumerate}
\end{theorem}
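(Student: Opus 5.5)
The implications (1)$\Rightarrow$(2) and (2)$\Leftrightarrow$(3) are already in hand. Corollary~\ref{cor:locclconst} gives (1)$\Rightarrow$(2). Since $X$ is toroidal, every $G$-orbit is toroidal, so Proposition~\ref{prop:local-spherical} gives $\Cl_x(X)=S(Gx)$ and $\Cl_{x'}(X)=S(Gx')$, which is (2)$\Leftrightarrow$(3). The substance is (3)$\Rightarrow$(1), and the plan is to reduce it to the toric Theorem~\ref{thm:toric} on a suitable principal $B$-chart, then transport root subgroups back to $X$ via Lemma~\ref{lem:replica}.

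Let $Y=Gx$ and $Y'=Gx'$ with $S(Y)=S(Y')$. The first step is to find a single $G$-orbit $Y_0$ and a chart $X^0_{Y_0}$ meeting both $Y$ and $Y'$. The natural choice is to put $Y_0$ in the closure of both orbits, for instance a closed orbit. Since $X$ is affine it has a unique closed $G$-orbit $Y_0$, and every orbit contains $Y_0$ in its closure, so $X_{Y_0}=X$. Because $Y_0$ is toroidal, $X^0_{Y_0}=X\setminus\bigcup_{D\in\cD^B(Y_0)}D$ contains no colors. The $B$-stable divisors not containing $Y_0$ are $G$-stable divisors not containing the closed orbit, and on affine $X$ there are none. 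Hence $X^0_{Y_0}=X\setminus\bigcup_{F\in\cD}F$ meets every $G$-orbit, and each $G$-orbit $Y''$ meets $X^0_{Y_0}\simeq P_u\times Z$ in $P_u\times O''$ for a $T'$-orbit $O''$ of the affine toric variety $Z=Z_{Y_0}$. By Lemma~\ref{lem:principal-chart}, $X^0_{Y_0}=X_f$ for a regular $B$-semiinvariant $f$.

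Next I will compare invariants. Applying Corollary~\ref{cor:open-kernels} with $U=X_f$ (the complement consists exactly of the colors) and Lemma~\ref{lem:product}, $\Cl_x(X)$ is the preimage of $\Cl_z(Z)$ under the surjection $\Cl(X)\to\Cl(X_f)\cong\Cl(Z)$, where $x=(u,z)$. Since $\Cl_x(X)\supseteq\ker$, equality $\Cl_x(X)=\Cl_{x'}(X)$ gives $\Cl_z(Z)=\Cl_{z'}(Z)$. By Theorem~\ref{thm:toric}, $z$ and $z'$ lie in one $\operatorname{AT}(Z)$-orbit. Each $T'$-root subgroup of $Z$ extends to a $\Ga$-action on $P_u\times Z$ trivial on the first factor, and likewise the torus acts. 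So, after moving by $G$ to points of $X_f$ and using translations in $P_u$, $x$ and $x'$ are connected through the chart by a chain of orbits of $\Ga$-subgroups and $T'$. The torus part can be realized by $L\subseteq G$, or alternatively written as a product of root-subgroup moves. Lemma~\ref{lem:replica} replaces each $\Ga$-subgroup of $\Aut(X_f)$ by a $\Ga$-subgroup of $\Aut(X)$ with the same orbits on $X_f$. Since $\Ga$ and $G$ lie in $\Aut^0(X)$, this gives $x'\in\Aut^0(X)x$.

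I expect the main obstacle to be the chart step. Everything depends on some principal open set, containing points of both orbits, being of the product form $P_u\times Z$ with $Z$ toric. I will justify this from affineness (unique closed orbit, no $G$-divisors avoiding it) together with toroidality. I also need to make sure the toric chain stays inside $X_f$ at every step, which it does since all the moves are automorphisms of $X_f$.
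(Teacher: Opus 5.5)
Your proposal is correct and follows essentially the same route as the paper. You move $x,x'$ into the principal chart $X_{f_Y}\simeq P_u\times Z_Y$ of the unique closed orbit, pass to $\Cl_z(Z_Y)=\Cl_{z'}(Z_Y)$ via Lemmas~\ref{lem:open} and~\ref{lem:product}, apply Theorem~\ref{thm:toric}, lift the torus moves to $L$, and extend the root subgroups to $X$ by Lemma~\ref{lem:replica}. One caveat: your aside that the torus moves could instead be written as root-subgroup moves is false in general (for example, when $Z_Y$ is itself a torus it has no root subgroups at all), so keep relying on the lift to $L$, as your main argument already does.
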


\begin{proof}
The implication $(1)\Rightarrow(2)$ follows from Corollary~\ref{cor:locclconst} and the equivalence $(2)\Leftrightarrow(3)$ follows from Proposition~\ref{prop:local-spherical}.

$(2)\Rightarrow(1)$. We remind the reader, that there is only one closed $G$-orbit $Y$ on $X$ because the categorical quotient $X/\!\!/G$ is one point. Consider the principal open~$B$-chart $X_Y^0=X_{f_Y}\simeq P_u\times Z_Y$. Every $G$-orbit meets this chart since $X$ is toroidal. Hence acting by elements of $G$ we may assume that $x$ and $x'$ lie in $X_Y^0$. Then, acting by elements of $P_u$, we may assume
\[
 x=(e,z),\qquad x'=(e,z')
\]
for some $z,z'\in Z_Y$. These operations preserve condition (2). Lemmas~\ref{lem:open} and~\ref{lem:product} yield the equality $\Cl_z(Z_Y)=\Cl_{z'}(Z_Y)$. By Theorem~\ref{thm:toric}, there is an element $\varphi \in\operatorname{AT}(Z_Y)$ sending $z$ to $z'$.

Write $\varphi=g_k\circ\dots\circ g_1$, where each $g_i$ lies either in the acting torus of $Z_Y$ or in a root subgroup $H_i\subseteq\Aut(Z_Y)$. We can do this by $(1)\Leftrightarrow(2)$ in Theorem~\ref{thm:toric}. Put~$z_0=z$ and~$z_i=g_i(z_{i-1})$, so that $z_k=z'$. We claim that $(e,z_{i-1})$ and $(e,z_i)$ lie in the same~$\Aut^0(X)$-orbit for every $i$. If $g_i$ lies in the torus, then it lifts to an element of the Levi subgroup $L$, since $L$ maps onto this torus, and this element sends $(e,z_{i-1})$ to $(e,z_i)$. If~$g_i\in H_i$, then~$H_i$ acts on $X_Y^0 = P_u\times Z_Y$ trivially on the first factor. Since $X_Y^0=X_{f_Y}$ is a principal open subset, Lemma~\ref{lem:replica} gives a $\Ga$-subgroup $H_i'\subseteq\Aut^0(X)$ with the same orbits on $X_Y^0$ as $H_i$. In particular, $(e,z_{i-1})$ and $(e,z_i)$ lie in the same $H_i'$-orbit. Therefore~$x$ and~$x'$ lie in the same $\Aut^0(X)$-orbit. This finishes the proof.
\end{proof}

\begin{remark}
    Note that if $x$ is a smooth point on $X$, then $\Cl_x(X)=\Cl(X)$ since any divisor is principal in some neighborhood of $x$. Theorem~\ref{thm:affine} therefore says that all smooth points lie in the same $\Aut^0(X)$-orbit. This is consistent with Shafarevich's result on the flexibility of affine spherical varieties.
\end{remark}

Recall that $\overline{\varphi}$ denotes the action of $\varphi\in\Aut(X)$ on $\Cl(X)$; see Proposition~\ref{prop:locact}. 

\begin{corollary}\label{cor:full-affine}
Let $X$ be an affine toroidal spherical variety. Two $G$-orbits $Y,Y'$ are contained in the same $\Aut(X)$-orbit if and only if there is an automorphism $\varphi\in\Aut(X)$ such that
\[
 \overline{\varphi}(S(Y))=S(Y').
\]
\end{corollary}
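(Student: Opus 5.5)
The plan is to reduce the statement to Theorem~\ref{thm:affine} by conjugating with $\varphi$. By Proposition~\ref{prop:local-spherical}, every $G$-orbit of $X$ is toroidal, so $S(Y)=\Cl_y(X)$ for any $y\in Y$, and likewise $S(Y')=\Cl_{y'}(X)$ for any $y'\in Y'$. I will also use that $\Aut^0(X)$ is normal in $\Aut(X)$: if $\psi\colon C\times X\to X$ is a rational-curve family connecting $\id_X$ and $\alpha$, then $(c,x)\mapsto\varphi\psi_c\varphi^{-1}(x)$ is such a family connecting $\id_X$ and $\varphi\alpha\varphi^{-1}$.

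\emph{Necessity.} Suppose $Y$ and $Y'$ lie in the same $\Aut(X)$-orbit. Then there are $y\in Y$, $y'\in Y'$ and $\varphi\in\Aut(X)$ with $\varphi(y)=y'$. By Proposition~\ref{prop:locact},
\[
 \overline{\varphi}(S(Y))=\overline{\varphi}(\Cl_y(X))=\Cl_{\varphi(y)}(X)=\Cl_{y'}(X)=S(Y').
\]

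\emph{Sufficiency.} Suppose $\overline{\varphi}(S(Y))=S(Y')$ for some $\varphi\in\Aut(X)$. Pick $y\in Y$ and put $x=\varphi(y)$. By Proposition~\ref{prop:locact}, $\Cl_x(X)=\overline{\varphi}(\Cl_y(X))=\overline{\varphi}(S(Y))=S(Y')=\Cl_{y'}(X)$ for every $y'\in Y'$. Theorem~\ref{thm:affine}, implication $(2)\Rightarrow(1)$, then gives $\alpha\in\Aut^0(X)$ with $\alpha(x)=y'$. Hence $\alpha\varphi(y)=y'$, and $Y$ and $Y'$ meet the same $\Aut(X)$-orbit. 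Since $G$-orbits are contained in $\Aut^0(X)$-orbits, hence in $\Aut(X)$-orbits, the orbits $Y$ and $Y'$ lie in the same $\Aut(X)$-orbit.

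The point that needs care is that Theorem~\ref{thm:affine} is applied at $x=\varphi(y)$, which need not lie in $Y$. This is allowed because the theorem holds for arbitrary points of $X$, and the identification $\Cl_x(X)=S(Gx)$ was only needed at $y$ and $y'$, where Proposition~\ref{prop:local-spherical} applies directly. The normality of $\Aut^0(X)$ is not strictly needed here, but it shows that the criterion is independent of the choice of representatives.
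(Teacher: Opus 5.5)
Your proof is correct and follows the paper's argument, which says only that the corollary follows immediately from Proposition~\ref{prop:locact} and Theorem~\ref{thm:affine}. You supply the details: necessity via Proposition~\ref{prop:locact}, and sufficiency via Theorem~\ref{thm:affine} $(2)\Rightarrow(1)$ applied at $\varphi(y)$, with Proposition~\ref{prop:local-spherical} converting $S$ into local class groups. One small wording slip: the toroidality of every $G$-orbit comes from the hypothesis on $X$, not from Proposition~\ref{prop:local-spherical}; that proposition is what then gives $S(Y)=\Cl_y(X)$.
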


\begin{proof}
Immediately follows from Proposition~\ref{prop:locact} and Theorem~\ref{thm:affine}.
\end{proof}

This corollary reduces the description of the $\Aut(X)$-orbits to the study of the image of~$\Aut(X)$ in~$\Aut(\Cl(X))$.

The following example illustrates Theorem~\ref{thm:affine} on an affine toroidal spherical variety with several $G$-orbits that does not admit the structure of a toric variety. Note that the homogeneous space $Q=\SL_2/T_0$ from this example is itself an affine toroidal spherical variety that is not toric.

\begin{example}\label{ex:affine}
Consider
\[
 Q=(\PP^1\times\PP^1)\setminus\Delta\simeq\SL_2/T_0,
 \qquad
 V=\Spec\K[u,v,w]/(uw-v^2),
\]
where $\Delta$ is the diagonal in $\PP^1\times\PP^1$ and $T_0\simeq\Gm\subseteq\SL_2$ is a maximal torus. We note that $Q$ is affine since, for example, $\Delta$ is an ample divisor or by Matsushima's criterion from the representation $Q=\SL_2/T_0$. The variety $V\simeq\A^2/(\Z/2\Z)$ is the standard quadratic cone and it is normal. Denote its acting torus by $\mathbb T$ and put
\[
 X=Q\times V,\qquad G=\SL_2\times\mathbb T.
\]
The variety $X$ is $G$-spherical but it is not toric even for a different torus action. It follows from the fact that the singular locus of $X$ is $Q\times\{0\}\simeq Q$. If $X$ were toric with respect to some torus $\widetilde T$, then $Q$ would be a normal toric variety. But $Q$ is not toric since it is smooth and $\Cl(Q) \simeq \Z$. Now we remind the reader that every smooth affine toric variety is isomorphic to $\A^k\times\Gm^l$ and hence has trivial divisor class group.

Now note that $X$ is toroidal since any color has the form $D\times V$, where $D$ is a divisor on~$Q$, and all $G$-orbits have the form $Q\times Y$, where $Y$ is a $\mathbb T$-orbit on $V$. One can easily obtain \begin{equation*}
    \Cl(X) = \Z[H]\oplus (\Z/2\Z)[E],
\end{equation*} where $H = (\A^1\times \{\infty\})\times V$ and $E = Q \times \{u=v=0\}$. We also have
\[
 S(Q\times\{0\})=\Z[H],
 \qquad
 S(Q\times Y)=\Z[H]\oplus(\Z/2\Z)[E]
 \quad(Y\ne\{0\}).
\]
Hence Theorem~\ref{thm:affine} gives exactly two $\Aut^0(X)$-orbits:
\[
 O_1 = Q\times\{0\},\qquad O_2 = Q\times(V\setminus\{0\}).
\]

Note that they are also two $\Aut(X)$-orbits since $O_1$ is the singular locus of~$X$.
\end{example}

\section{The complete case}\label{sec:complete}

In this section, $X$ is a smooth complete toroidal spherical $G$-variety. In this case $\Aut^0(X)$ is a connected linear algebraic group (see Section~\ref{sec:aut0}), $\Cl(X)=\Pic(X)$ and it is generated by the classes of $B$-stable prime divisors. We keep the notation of Section~\ref{sec:prelim}.


\subsection{The fan and the orbit monoids}\label{subsec:monoids} In the complete case we need the combinatorial description of spherical varieties. One can find more information on it in~\cite[Chapter 3]{Tim11}.

Spherical embeddings of the fixed spherical homogeneous space $G/H$, forming the open~$G$-orbit, are classified by colored fans in $N_\Q$. If $X$ is complete, then its fan $\cF$ covers the valuation cone $\mathcal V$. 

The colored fan of a toroidal spherical variety has no colors attached to its cones, and the support of the fan is contained in $\mathcal V$. If $X$ is toroidal and complete, then the support of its fan $\cF$ is exactly $\mathcal V$.

Each cone $\sigma\in\cF$ corresponds to a $G$-orbit $Y_\sigma$ and
\begin{equation}\label{eq:closure}
 Y_\sigma\subseteq\overline{Y_\tau}
 \quad\Longleftrightarrow\quad \tau\text{ is a face of }\sigma.
\end{equation}
The rays $\rho\in\cF(1)$ correspond to the $G$-stable prime divisors $D_\rho$. Let $v_\rho\in N$ be the primitive vector of $\rho$. Then $v_\rho=\overline{v_{D_\rho}}$ (see Section~\ref{sec:prelim}) and
\begin{equation}\label{eq:incidence}
 Y_\sigma\subseteq D_\rho
 \quad\Longleftrightarrow\quad\rho\in\sigma(1).
\end{equation}
Smoothness of $X$ implies that each cone is \emph{regular}, i.e. it is generated by part of a basis of~$N$. In particular, every subset of $\sigma(1)$ spans a face of $\sigma$. Note also that, by~\eqref{eq:incidence}, a~$G$-orbit is uniquely determined by the set of $G$-stable prime divisors containing it. The latest obviously holds for any non-smooth toroidal spherical variety.

Recall the monoids $\Gamma(Y_\sigma)\subseteq\Cl(X)=\Pic(X)$ defined in~\eqref{eq:invariants}. Since $X$ is toroidal, no color contains $Y_\sigma$, and we can rewrite
\begin{equation}\label{eq:monoid-fan}
 \Gamma(Y_\sigma)=
 \sum_{F\in\cD}\Nn[F]
 +\sum_{\rho\in\cF(1)\setminus\sigma(1)}\Nn[D_\rho].
\end{equation}

\begin{remark}
    The use of the monoid instead of the subgroup is crucial. Since $X$ is smooth, every divisor is locally principal and therefore $\Cl_x(X)=\Cl(X)$ for all $x\in X$. By Proposition~\ref{prop:local-spherical} we get $S(Y)=\Cl_x(X)=\Pic(X)$ for every $G$-orbit $Y$.
\end{remark}

The proposition below shows that the monoid $\Gamma(Y)$ admits a pure geometric description. This is an analog of Proposition~\ref{prop:local-spherical} in the smooth case.

\begin{proposition}\label{prop:intrinsic}
For a $G$-orbit $Y\subseteq X$ and a point $y\in Y$ the following equality holds:
\begin{equation}\label{eq:intrinsic}
 \Gamma(Y)=\Gamma(y), \quad \text{where}\quad \Gamma(y)=\{[\mathcal L]\in\Pic(X)\mid
 \exists s\in H^0(X,\mathcal L): \ s(y)\ne0\}.
\end{equation}
\end{proposition}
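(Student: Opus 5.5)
We prove the two inclusions separately, identifying line bundles with divisor classes via $\Cl(X)=\Pic(X)$.

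\emph{Inclusion $\Gamma(Y)\subseteq\Gamma(y)$.} Let $D\in\cD^B(Y)$. Then $\cO(D)$ has the canonical section $s_D$ with $\divisor(s_D)=D$. Since $D$ is $B$-stable and prime with $Y\not\subseteq D$, the set $Y\cap D$ is a proper closed subset of $Y$. Hence $s_D$ is nonzero at a general point of $Y$. The set $\Gamma(y)$ is a monoid, because the product of sections nonvanishing at $y$ is again nonvanishing at $y$. It therefore suffices to show that $\Gamma(y)$ does not depend on the choice of $y\in Y$. For $g\in G$, the translate $g^*s$ is a section of $g^*\mathcal L$. By Lemma~\ref{lem:trivial}, $g^*\mathcal L\simeq\mathcal L$, since the image of $G$ lies in $\Aut^0(X)$. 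A section of $\mathcal L$ nonvanishing at $gy$ thus yields a section of $\mathcal L$ nonvanishing at $y$. Consequently $\Gamma(y)=\Gamma(gy)$, and the generators $[D]$ of $\Gamma(Y)$ lie in $\Gamma(y)$.

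\emph{Inclusion $\Gamma(y)\subseteq\Gamma(Y)$.} Let $s\in H^0(X,\mathcal L)$ satisfy $s(y)\ne0$. Then $E=\divisor(s)$ is an effective divisor not containing $y$, and by the $G$-invariance just shown we may take $y\in Y\cap X_Y^0$. We want to replace $E$ by a $B$-stable effective divisor, keeping both its class and the property of not containing $Y$.

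First linearize: after passing to a finite cover of $G$, which does not change the orbits, $\mathcal L$ admits a $G$-linearization. Then $V=H^0(X,\mathcal L)$ is a finite-dimensional $G$-module, and the subset
\[
 V_y=\{s\in V\mid s(y)\ne0\}
\]
is nonempty, open and stable under the subgroup fixing $y$. Now apply the Borel fixed point theorem. Consider the closure of $B\cdot[s]$ in $\PP(V)$. It contains a $B$-fixed point $[s_0]$, and this $s_0$ is a $B$-eigensection. The difficulty is that $s_0$ may vanish at $y$, so the fixed point has to be controlled.

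The better route is to use the open $B$-chart directly. On $X_Y^0\simeq P_u\times Z_Y$ we have $\Cl(X_Y^0)=0$, as in the proof of Proposition~\ref{prop:local-spherical}, and $\cO(X_Y^0)^\times$ consists of $B$-semiinvariants up to constants. Trivialize $\mathcal L$ on $X_Y^0$ by a $B$-eigensection $\sigma$ whose divisor is supported on $X\setminus X_Y^0$; that complement is exactly the union of the divisors in $\cD^B(Y)$. Then $s=h\sigma$ with $h\in\K[X_Y^0]$ and $h(y)\ne0$.

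Decompose $h$ into $B$-semiinvariants $h=\sum h_m$, which is possible for $T$-weights after accounting for $P_u$-invariance. It is cleaner to note that $X_Y^0$ has the unique closed $P$-stable subset $Y\cap X_Y^0=P_u\times O_Y$, and that the $B$-semiinvariant regular functions on $X_Y^0$ are the monomials $\chi^m$ with $m\in\sigma^\vee\cap M$. A regular function nonvanishing on $Y\cap X_Y^0$ must have a nonzero component along a weight $m$ that vanishes on all of $\sigma$, i.e.\ a unit on $X_Y^0$. For such $m$ the section $f_m\sigma$ is a $B$-eigensection whose divisor is effective and supported in $X\setminus X_Y^0$, which gives $[\mathcal L]\in\Gamma(Y)$.

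Effectivity has to be checked: each coefficient $\ord_D(f_m\sigma)$ for $D\in\cD^B(Y)$ must be nonnegative. Pass to the $B$-stable subspace of $V$ spanned by $B s$. In it, take a $B$-eigenvector obtained by projecting onto the weight component of the restriction map $V\to\K[Y\cap X_Y^0]$. This uses $U$-invariants and the fact that restriction to the closed $P$-orbit piece is $B$-equivariant and nonzero on $s$. The resulting eigenvector is a global section, hence has an effective divisor.

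\textbf{Main obstacle.} The hardest step is producing a global $B$-eigensection nonvanishing on $Y$, rather than only a local one. I would get it by applying $U$-invariants to the nonzero $B$-module image of $V$ in $H^0(Y\cap X_Y^0,\mathcal L)$, using that $Y\cap X_Y^0=P_u\times O_Y$ has $B$-semiinvariant functions only of weights trivial on $\sigma$.
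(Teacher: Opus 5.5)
Your proof of $\Gamma(Y)\subseteq\Gamma(y)$ is correct and is essentially the paper's argument. The paper picks $g$ with $gy\notin D$ and uses $[D]=[g^{-1}D]$; you show $\Gamma(gy)=\Gamma(y)$ via $g^*\mathcal L\simeq\mathcal L$, which comes to the same thing.

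The reverse inclusion, however, is not established. The chart-based route rests on a false step. A function $h\in\K[X_Y^0]\simeq\K[P_u]\otimes\K[Z_Y]$ is a sum of $T$-weight vectors, but in general not a sum of $B$-semiinvariants, because $\K[P_u]$ is not spanned by $U$-invariants. So there is no ``component of $h$ along a weight $m$ trivial on $\sigma$''. Even granting one, effectivity of $f_m\sigma$ would remain unproved, as you note yourself.

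Your final fallback is to take a $B$-eigenvector in the $B$-span of $s$, or to take $U$-invariants of the $B$-module image of $V$ in $H^0(Y\cap X_Y^0,\mathcal L)$. This uses only $B$-equivariance, and that is not enough. A $B$-eigenvector in the image need not lift to a $B$-eigenvector of $V$, since taking $U$-invariants is not exact on $B$-modules. For instance, take $G=\SL_2$, $V=\operatorname{S}^2\K^2\oplus\K$, kernel $\operatorname{S}^2\K^2$ (even a $G$-submodule), and $s=(e_1e_2,1)$. The $B$-span of $s$ is $\langle s,(e_1^2,0)\rangle$, and its only $B$-eigenline lies in the kernel, although $s$ does not.

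The missing ingredient is reductivity of $G$, applied to the full module $V=H^0(X,\mathcal L)$. Restrict to the whole orbit $Y$ rather than to the $P$-stable piece $Y\cap X_Y^0$; the kernel is the same, since $Y\cap X_Y^0$ is dense in $Y$. After linearizing, $r\colon V\to H^0(Y,\mathcal L|_Y)$ is $G$-equivariant, so $\ker r$ is a $G$-submodule. Complete reducibility then gives a simple $G$-submodule $V'\subseteq V$ with $V'\cap\ker r=0$. A highest weight vector $s_0\in V'$ is a global $B$-eigensection with $r(s_0)\neq0$.

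Its divisor is effective because $s_0$ is a global section. Every prime component of $\divisor(s_0)$ is $B$-stable because $s_0$ is a $B$-eigenvector. None of them contains $Y$ because $s_0|_Y\neq0$. Hence $[\mathcal L]=[\divisor(s_0)]\in\Gamma(Y)$.

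This is exactly the paper's proof. It needs neither the local structure of $X_Y^0$ nor any information about weights of $B$-semiinvariants on $P_u\times O_Y$.
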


\begin{proof}
The set $\Gamma(y)$ is a monoid since if two sections $s\in H^0(X, \mathcal L)$ and $s' \in H^0(X, \mathcal L')$ are nonzero at $y$, then $s\otimes s' \in H^0(X, \mathcal L\otimes\mathcal L')$ is also nonzero at $y$.

Let $D$ be a $B$-stable prime divisor not containing $Y$. Then there exists $g\in G$ such that $gy\notin D$. By Lemma~\ref{lem:trivial} we have $[D]=[g^{-1}D]$. The canonical section of $\cO_X(g^{-1}D)$ is nonzero at $y$ and hence $[D]\in\Gamma(y)$. This proves the inclusion $\Gamma(Y)\subseteq\Gamma(y)$.

Conversely, take a line bundle $\mathcal L$ with a section $s$ such that $s(y)\ne 0$. We may assume that $\mathcal L$ admits a $G$-linearization by replacing $G$ with its finite cover. This does not change the orbits. The restriction homomorphism
\[
 r\colon H^0(X,\mathcal L)\longrightarrow H^0(Y,\mathcal L|_Y)
\]
is a nonzero homomorphism of $G$-modules because $r(s)$ is nonzero. Since $H^0(X,\mathcal L)$ is finite-dimensional ($X$ is complete) and $G$ is reductive, it has a simple submodule $V$ such that the restriction $r|_V$ is injective. Let $s_0\in V$ be a nonzero highest weight vector. Then~$r(s_0)\ne0$ and hence the support of the effective $B$-stable divisor $E=\divisor(s_0)$ does not contain $Y$. Consequently, every prime component of $E$ belongs to $\cD^B(Y)$ and
\[
 [\mathcal L]=[E]\in\Gamma(Y).
\]
This proves the reverse inclusion.
\end{proof}

\begin{corollary}\label{cor:necessary}
If two $G$-orbits $Y$ and $Y'$ lie in the same $\Aut^0(X)$-orbit, then $\Gamma(Y)=\Gamma(Y')$.
\end{corollary}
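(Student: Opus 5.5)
The plan is to combine Proposition~\ref{prop:intrinsic} with the fact that $\Aut^0(X)$ acts trivially on $\Pic(X)$ (Lemma~\ref{lem:trivial}). The monoid $\Gamma(y)$ is defined intrinsically, through line bundles and their global sections, so we expect it to be transported by automorphisms. Since $\Aut^0(X)$ fixes every class in $\Pic(X)$, it should then fix $\Gamma(y)$ itself.

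Concretely, we first pick points $y\in Y$ and $y'\in Y'$ and an automorphism $\varphi\in\Aut^0(X)$ with $\varphi(y)=y'$. Let $[\mathcal L]\in\Gamma(y')$, and choose a section $s\in H^0(X,\mathcal L)$ with $s(y')\ne0$. Pulling back gives $\varphi^*s\in H^0(X,\varphi^*\mathcal L)$, and since $(\varphi^*s)(y)$ corresponds to $s(\varphi(y))=s(y')\ne0$, this section does not vanish at $y$. Hence $[\varphi^*\mathcal L]\in\Gamma(y)$.

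By Lemma~\ref{lem:trivial}, $[\varphi^*\mathcal L]=[\mathcal L]$ in $\Pic(X)=\Cl(X)$. Here we use that $X$ is smooth, so line bundles correspond to divisor classes, and $\varphi^*$ on divisors corresponds to $\overline{\varphi^{-1}}$, which is also the identity. Therefore $\Gamma(y')\subseteq\Gamma(y)$. Applying the same argument to $\varphi^{-1}$ gives the reverse inclusion, so $\Gamma(y)=\Gamma(y')$.

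Finally, Proposition~\ref{prop:intrinsic} gives $\Gamma(Y)=\Gamma(y)=\Gamma(y')=\Gamma(Y')$. No step looks like a real obstacle; the only point requiring care is identifying the action of $\varphi^*$ on $\Pic(X)$ with the action on $\Cl(X)$ from Lemma~\ref{lem:trivial}.
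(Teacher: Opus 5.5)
Your proposal is correct and follows essentially the same route as the paper: combine Proposition~\ref{prop:intrinsic} with the triviality of the $\Aut^0(X)$-action on $\Pic(X)$ (Lemma~\ref{lem:trivial}) and the transport rule $\Gamma(\varphi(y))=\overline{\varphi}(\Gamma(y))$. The only difference is that the paper states this rule without proof, while you verify it explicitly by pulling back sections along $\varphi$ and $\varphi^{-1}$.
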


\begin{proof}
The group $\Aut^0(X)$ acts trivially on $\Pic(X)$ and for any $\varphi \in \Aut(X)$ we have~$\Gamma(\varphi(y)) = \overline{\varphi}(\Gamma(y))$, where $\overline{\varphi}$ denotes the image of $\varphi$ in $\Aut(\Pic(X))$. Let~$y \in Y$,~$y'\in Y'$ and $\psi\in\Aut^0(X)$ such that $\psi(y)=y'$. We have \[
\Gamma(Y')=\Gamma(y')=\Gamma(\psi(y))=\overline{\psi}(\Gamma(y))=\Gamma(y)=\Gamma(Y).
\]
This finishes the proof.
\end{proof}

The converse of Corollary~\ref{cor:necessary} is proved by moving from one $G$-orbit to another step by step. At every step we remove one ray from a cone. Namely, let $\sigma\in\cF$ and $\rho\in\sigma(1)$, and let $\sigma'$ be the face of $\sigma$ generated by $\sigma(1)\setminus\{\rho\}$. It exists since $\sigma$ is regular. By~\eqref{eq:closure} and~\eqref{eq:incidence}, the orbit $Y_\sigma$ lies in the closure of $Y_{\sigma'}$, and $Y_{\sigma'}$ is not contained in $D_\rho$. We will show that $Y_\sigma$ and $Y_{\sigma'}$ lie in the same $\Aut^0(X)$-orbit if and only if
\begin{equation}\label{eq:deletion}
 [D_\rho]\in\Gamma(Y_\sigma).
\end{equation}
The necessity is obvious. Indeed, the divisor $D_\rho$ does not contain $Y_{\sigma'}$ and hence~$[D_\rho]\in~\Gamma(Y_{\sigma'})$, but $\Gamma(Y_{\sigma'})=\Gamma(Y_\sigma)$ by Corollary~\ref{cor:necessary}. Therefore, we need to prove the sufficiency. This is the content of Lemma~\ref{lem:transition} below.

First, we show that condition~\eqref{eq:deletion} can be checked directly on the colored fan of~$X$. Recall the standard exact sequence
\begin{equation}\label{eq:pic}
 0\longrightarrow M
 \xrightarrow{\ \psi }
 \bigoplus_{D\in\cD^B}\Z D
 \longrightarrow\Pic(X)\longrightarrow0,
\end{equation}
where
\begin{equation}\label{eq:principal}
 \psi(m)=\divisor(f_m)=
 \sum_{\rho\in\cF(1)}\pair{v_\rho}{m}D_\rho
 +\sum_{F\in\cD}\pair{v_F}{m}F.
\end{equation}

\begin{proposition}\label{prop:inequalities}
Let $\sigma\in\cF$ and let $\rho\in\sigma(1)$. Condition~\eqref{eq:deletion} holds if and only if there is an element $m\in M$ such that
\begin{align}
 \pair{v_\rho}{m}&=-1,\label{eq:root1}\\
 \pair{v_{\rho'}}{m}&=0
 &&\text{for }\rho'\in\sigma(1)\setminus\{\rho\},\label{eq:root2}\\
 \pair{v_{\eta}}{m}&\geq0
 &&\text{for }\eta\in\cF(1)\setminus\sigma(1),\label{eq:root3}\\
 \pair{v_F}{m}&\geq0
 &&\text{for }F\in\cD.\label{eq:root4}
\end{align}
\end{proposition}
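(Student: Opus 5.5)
The plan is to translate condition~\eqref{eq:deletion} into a statement about divisors and read it through the exact sequence~\eqref{eq:pic}. By~\eqref{eq:monoid-fan}, $[D_\rho]\in\Gamma(Y_\sigma)$ means that there is an effective divisor
\[
E=\sum_{F\in\cD}a_F F+\sum_{\eta\in\cF(1)\setminus\sigma(1)}b_\eta D_\eta,\qquad a_F,b_\eta\in\Nn,
\]
with $[E]=[D_\rho]$ in $\Pic(X)$. By exactness of~\eqref{eq:pic}, this is equivalent to the existence of $m\in M$ with $E-D_\rho=\psi(m)=\divisor(f_m)$. I will argue this reformulation in both directions.

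\emph{Direction: condition~\eqref{eq:deletion} implies the inequalities.} Suppose such $E$ exists and pick $m$ with $E-D_\rho=\divisor(f_m)$. Compare coefficients using~\eqref{eq:principal}. The divisor $E-D_\rho$ has coefficient $-1$ at $D_\rho$, which gives~\eqref{eq:root1}. It has coefficient $0$ at $D_{\rho'}$ for $\rho'\in\sigma(1)\setminus\{\rho\}$, because $E$ involves no $D_{\rho'}$ with $\rho'\in\sigma(1)$; this gives~\eqref{eq:root2}. Its coefficients at $D_\eta$ for $\eta\notin\sigma(1)$ are $b_\eta\ge0$, giving~\eqref{eq:root3}. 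Its coefficients at colors are $a_F\ge0$, giving~\eqref{eq:root4}. Here I use that the $D_\rho$ and the colors $F$ together are exactly $\cD^B$, so coefficients can be compared in the free group $\bigoplus_{D\in\cD^B}\Z D$.

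\emph{Direction: the inequalities imply condition~\eqref{eq:deletion}.} Conversely, given $m$ satisfying~\eqref{eq:root1}--\eqref{eq:root4}, set $E=\divisor(f_m)+D_\rho$. By~\eqref{eq:principal}, its coefficient at $D_\rho$ is $0$, its coefficients at $D_{\rho'}$ for $\rho'\in\sigma(1)\setminus\{\rho\}$ are $0$, and all remaining coefficients are nonnegative. So $E$ is effective, and every prime component of $E$ is a color or some $D_\eta$ with $\eta\notin\sigma(1)$. Then $[D_\rho]=[E]\in\Gamma(Y_\sigma)$ by~\eqref{eq:monoid-fan}.

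\emph{Expected obstacle.} There is no serious obstacle; this is essentially a bookkeeping argument. The only points to check are two. First, exactness of~\eqref{eq:pic} in the middle: the relation $[E]=[D_\rho]$ means $E-D_\rho$ is principal, and a principal divisor that is a combination of $B$-stable divisors has the form $\divisor(f_m)$, as encoded by~\eqref{eq:pic}. Second, the rays of $\cF$ exhaust the $G$-stable prime divisors, so there are no additional $G$-stable divisors in $\cD^B$. For complete $X$, every $G$-stable prime divisor corresponds to a ray of the fan, so this holds.
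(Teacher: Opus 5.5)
Your proposal is correct and is essentially the paper's own proof. You rewrite $[D_\rho]\in\Gamma(Y_\sigma)$ via~\eqref{eq:monoid-fan} as the existence of an effective $E\sim D_\rho$ supported on the colors and the $D_\eta$ with $\eta\notin\sigma(1)$, use exactness of~\eqref{eq:pic} in the middle to get $E-D_\rho=\divisor(f_m)$, and then compare coefficients via~\eqref{eq:principal} in both directions, exactly as the paper does.
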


\begin{proof}
If $[D_\rho]\in\Gamma(Y_\sigma)$, then, by~\eqref{eq:monoid-fan}, there is an effective divisor $E\sim D_\rho$ whose prime components are among the colors $F\in\cD$ and the divisors $D_\eta$, $\eta\in\cF(1)\setminus\sigma(1)$. By the exactness of~\eqref{eq:pic} at the middle term,
\[
 E-D_\rho=\divisor(f_m)
\]
for some $m\in M$. Comparing coefficients gives conditions~\eqref{eq:root1}--\eqref{eq:root4}.

Conversely, these conditions imply that $E=D_\rho+\divisor(f_m)$ is effective and its prime components are among the $B$-stable prime divisors not containing $Y_\sigma$. This yields\[[D_\rho]=[E]\in\Gamma(Y_\sigma),\] and finishes the proof.
\end{proof}

\subsection{Global vector fields and \texorpdfstring{$B$}{B}-root subgroups}\label{subsec:roots}

A one-parameter unipotent subgroup~$U\subseteq\Aut(X)$ normalized by the group $B$ is called a~$B$\emph{-root subgroup}. Its weight is the character by which $B$ acts on $\Lie U$ under the adjoint representation. Properties of $B$-root subgroups on spherical varieties were studied in depth in~\cite{AA}, \cite{AZ22}, and~\cite{AZ23}.

Denote by $\operatorname{GB}(X)\subseteq\Aut^0(X)$ the subgroup generated by the image of $G$ and all $B$-root subgroups. In this subsection, we show that if condition~\eqref{eq:deletion} holds, then orbits~$Y_\sigma$ and~$Y_{\sigma'}$ can be connected by a $B$-root subgroup. The following description of global vector fields will help us construct such a subgroup.

\begin{theorem}\cite[Proposition 4.1.1]{BB}\label{thm:BB}
Let $X$ be a smooth complete toroidal spherical variety and let $D_1,\dotsc,D_k$ be all the $G$-stable prime divisors on $X$. There is an exact sequence of $G$-modules
\begin{equation}\label{eq:BB}
 0\longrightarrow H^0(X,\cS_X)
 \longrightarrow H^0(X,\cT_X)
 \xrightarrow{\nu}
 \bigoplus_{i=1}^k H^0(D_i,\cO_{D_i}(D_i))
 \longrightarrow0,
\end{equation} where $\cT_X$ is the tangent sheaf of $X$ and $\cS_X$ is the subsheaf of $\cT_X$ consisting of vector fields tangent to every $D_i$. The map $\nu$ takes the normal components along the divisors~$D_1,\dotsc,D_k$.
\end{theorem}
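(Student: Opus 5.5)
The plan is to derive \eqref{eq:BB} from a short exact sequence of sheaves and then kill the obstruction $H^1$. Since $X$ is smooth and toroidal, the boundary $\partial X=D_1\cup\dots\cup D_k$ is a divisor with simple normal crossings. One sees this from the Local Structure Theorem~\ref{LST}. On each open chart $X_Y^0\simeq P_u\times Z_Y$, the toric variety $Z_Y$ is smooth, because the cones of $\cF$ are regular, and the $D_i$ meeting the chart correspond to coordinate hyperplanes of $Z_Y\simeq\A^r\times\Gm^s$. The $G$-translates of these charts cover $X$.

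Hence $\cS_X$ is the logarithmic tangent sheaf $\cT_X(-\log\partial X)$. In the local coordinates above one checks that there is an exact sequence of $G$-linearized sheaves
\[
0\longrightarrow\cS_X\longrightarrow\cT_X\longrightarrow\bigoplus_{i=1}^k\cO_{D_i}(D_i)\longrightarrow0 .
\]
Here the last map takes the normal component along each $D_i$, using $\mathcal N_{D_i/X}\simeq\cO_{D_i}(D_i)$. All maps are $G$-equivariant, so the associated long exact cohomology sequence consists of $G$-modules. It yields \eqref{eq:BB}, with $\nu$ the map induced on global sections, as soon as we show $H^1(X,\cS_X)=0$.

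The main step, and the expected obstacle, is this vanishing. First I would show that $\cS_X$ is generated by the image of $\mathfrak g=\Lie G$, i.e.\ that $\cO_X\otimes\mathfrak g\to\cS_X$ is surjective. On the chart $P_u\times Z_Y$, the algebra $\Lie P_u$ supplies the $P_u$-directions, and the torus $T'=L/[L,L]$ supplies the logarithmic vector fields $x_j\partial_{x_j}$ and $\partial_{t_l}$ on the smooth toric variety $Z_Y$. This is exactly where toroidality is used: no colors meet the chart, so $Z_Y$ is toric.

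Next I would identify $\cS_X$ with a pullback. In the toroidal case the isotropy of $\cS_X$ at $y\in Y_\sigma$ behaves uniformly along the fibers of the natural morphism $X\to G/P$ attached to the closed orbits. More precisely, $\cS_X$ should be the pullback of a homogeneous vector bundle $G\times^{P^-}(\mathfrak g/\mathfrak h')$ along such a map. Its cohomology can then be computed by Kempf/Borel--Weil--Bott-type vanishing together with the rationality of the fibers. In the toric case this collapses to $\cS_X\simeq\cO_X\otimes\Lie T$, and then $H^1(X,\cO_X)=0$ because $X$ is complete and rational.

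As a fallback I would use a Čech or Leray argument on the $B$-stable affine cover by open $B$-charts, reducing via the Local Structure Theorem to the toric computation. Controlling the gluing along $P_u$ is the delicate point in that route.
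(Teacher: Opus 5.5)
Your reduction is correct and is the natural one. The boundary is a simple normal crossings divisor and $\cS_X=\cT_X(-\log\partial X)$. The sequence $0\to\cS_X\to\cT_X\to\bigoplus_i\cO_{D_i}(D_i)\to0$ is exact and $G$-equivariant. So \eqref{eq:BB} follows once the connecting map $\bigoplus_iH^0(D_i,\cO_{D_i}(D_i))\to H^1(X,\cS_X)$ vanishes, for instance if $H^1(X,\cS_X)=0$. The paper does not prove any of this: it imports the statement from Bien--Brion, together with their identification of regular varieties with smooth complete toroidal spherical ones.

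That vanishing is the entire content of the theorem, and your argument for it fails at its key step, because there is no $G$-equivariant morphism $X\to G/P$ in general. Restricted to the open orbit $G/H$, such a morphism forces $H$ into a conjugate of $P$. For the space of complete conics treated in the paper, $H$ contains $\SO_3$, which acts irreducibly on $\K^3$ and so lies in no proper parabolic subgroup of $\SL_3$. Hence $X$ admits no nonconstant $G$-morphism to any flag variety. Such a morphism does exist when $G/H$ is horospherical, where toroidal means $X=G\times^PZ$ with $P=N_G(H)$ and $Z$ smooth complete toric. There your argument works: use $0\to\cO_X\otimes\Lie(P/H)\to\cS_X\to\pi^*\cT_{G/P}\to0$, $H^1(X,\cO_X)=0$, and Bott vanishing on $G/P$. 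But that is only a special case.

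Global generation of $\cS_X$ by $\mathfrak g$, which you establish correctly, does not fill the gap. First, it only gives a morphism $X\to\operatorname{Gr}(\mathfrak g)$, $x\mapsto\ker(\mathfrak g\to\cS_X(x))$, with $\cS_X$ pulled back from the tautological quotient bundle. The image $\overline{G\cdot\mathfrak h}$, where $\mathfrak h$ is the generic stabilizer algebra, is in general not a flag variety. For the complete conics the normalizer of $\mathfrak h$ is reductive, so $G\cdot\mathfrak h$ is affine, and no Bott-type vanishing is available. Second, global generation by itself does not imply $H^1=0$.

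The \v{C}ech fallback is not carried out, and it is not a technicality. The open $B$-charts trivialize $\cS_X$, but these trivializations cannot glue, since $\cS_X$ is not trivial in general: for $X=G/P$ it is $\cT_{G/P}$. Controlling the cocycle is therefore the whole problem.

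As it stands, you have reduced the statement to a vanishing theorem of the same depth without proving it. To close the gap you must either invoke Bien--Brion, as the paper does, or supply a genuine argument for non-horospherical $X$. One possibility is a flat degeneration of the pair $(X,\partial X)$ to a smooth toroidal horospherical variety, combined with upper semicontinuity of $h^1$; that would itself require substantial work.
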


Bien and Brion proved this statement in~\cite{BB} for the class of varieties which are called \emph{regular}. But in the same paper they also proved that a smooth complete $G$-variety is regular if and only if it is spherical and toroidal; see~\cite[Proposition 2.2.1]{BB}.

Recall also that $H^0(X,\cT_X)=\Lie\Aut^0(X)$ since $X$ is complete. The next lemma plays a central role in the proof of the main theorem.

\begin{lemma}[Ray-removal lemma]\label{lem:transition}
Let $\sigma\in\cF$ and $\rho\in\sigma(1)$, and let $\sigma'$ be the face of $\sigma$ generated by $\sigma(1)\setminus\{\rho\}$. Suppose that condition~\eqref{eq:deletion} holds, and let $m\in M$ satisfy~\eqref{eq:root1}--\eqref{eq:root4}. Such $m$ exists by Proposition~\ref{prop:inequalities}. Then there is a $B$-root subgroup $U_m\subseteq\Aut^0(X)$ of weight $m$ such that the orbit $U_mx$ meets $Y_{\sigma'}$ for some point $x\in Y_\sigma$.
In particular, $Y_\sigma$ and $Y_{\sigma'}$ lie in the same $\operatorname{GB}(X)$-orbit and $\Gamma(Y_\sigma)=\Gamma(Y_{\sigma'})$.
\end{lemma}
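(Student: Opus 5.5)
We will construct the $B$-root subgroup from a global vector field furnished by Theorem~\ref{thm:BB}, and then check its behaviour on the chart $X^0_{Y_\sigma}$ via the Local Structure Theorem.

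\emph{Step 1: a normal section along $D_\rho$.} The function $f_m$ is a $B$-semiinvariant of weight $m$. By \eqref{eq:root1}--\eqref{eq:root4},
\[
\divisor(f_m)+D_\rho=E
\]
with $E$ effective, supported on colors and on $D_\eta$, $\eta\notin\sigma(1)$. Hence $f_m$ is a section of $\cO_X(D_\rho)$ that is nonzero on $D_\rho$: since $\pair{v_\rho}{m}=-1$ and $D_\rho$ is not a component of $E$, the restriction $s=f_m|_{D_\rho}$ is a nonzero $B$-eigenvector in $H^0(D_\rho,\cO_{D_\rho}(D_\rho))$. Its weight is $m$, up to the sign convention.

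Using the surjectivity of $\nu$ in \eqref{eq:BB} and complete reducibility of $G$-modules, we lift $s$ to a $B$-eigenvector $\xi\in H^0(X,\cT_X)=\Lie\Aut^0(X)$ of weight $m$ with normal component $s$ along $D_\rho$ and zero along the other $D_i$. The lift is chosen inside a $G$-stable complement of $H^0(X,\cS_X)$, mapping the highest-weight/eigen line isomorphically; since $s$ generates a $B$-eigenline, we can take $\xi$ in the corresponding isotypic piece.

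\emph{Step 2: $\xi$ is nilpotent, so it generates a $B$-root subgroup.} Since $\Aut^0(X)$ is a linear algebraic group and $\xi$ is a $B$-eigenvector of nonzero weight $m$ in its Lie algebra, $\xi$ is ad-nilpotent. We need it to be a nilpotent element, i.e.\ to lie in the unipotent part. We expect to argue that a weight vector for the maximal torus $T$ with nonzero weight in the Lie algebra of an algebraic group lies in the Lie algebra of the unipotent radical or in a root space, and is therefore nilpotent. Then $U_m=\exp(\K\xi)$ is a $\Ga$-subgroup normalized by $B$, i.e.\ a $B$-root subgroup of weight $m$. Here $m\neq 0$ because of \eqref{eq:root1}.

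\emph{Step 3: moving off $D_\rho$.} Take $x\in Y_\sigma\cap X^0_{Y_\sigma}$, with $X^0_{Y_\sigma}\simeq P_u\times Z$ and $Z$ a smooth affine toric variety, locally $\A^r\times$torus with coordinates given by the $v_{\rho'}$, $\rho'\in\sigma(1)$. Near $x$, the normal component of $\xi$ along $D_\rho$ equals $f_m$ times the normal vector $\partial/\partial t_\rho$, where $t_\rho$ is a local equation of $D_\rho$. Since $E$ does not contain $Y_\sigma$, we can choose $x\in Y_\sigma$ off $\supp E$, so that $s(x)\neq0$. Then $\xi(x)$ is transversal to $D_\rho$, while $\xi$ is tangent to the other $D_{\rho'}$, $\rho'\in\sigma(1)\setminus\{\rho\}$, because its normal components there vanish. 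Consequently the curve $U_m x$ leaves $D_\rho$ immediately but stays inside $\bigcap_{\rho'\neq\rho}D_{\rho'}$, since $U_m$ preserves these divisors as $\xi$ is tangent to them. Generic points of this curve therefore lie in a $G$-orbit contained in exactly the $G$-stable divisors $D_{\rho'}$, $\rho'\in\sigma(1)\setminus\{\rho\}$ (points near $x$ avoid the others). By \eqref{eq:incidence} and the remark that an orbit is determined by these divisors, that orbit is $Y_{\sigma'}$. The final assertions then follow from Corollary~\ref{cor:necessary}.

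\emph{Main obstacle.} The main obstacle is Step 2 together with the lifting in Step 1: ensuring that the lift $\xi$ is a genuine $B$-eigenvector that integrates to a $\Ga$-action, rather than merely a semisimple or non-algebraic field. I would first try the $T$-weight argument inside $\Lie\Aut^0(X)$, using that $B$ acts on $\xi$ through a nontrivial character.
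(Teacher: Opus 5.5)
Your construction is the paper's. You view $f_m$ as a section of $\cO_X(D_\rho)$ with effective divisor $E$, take its nonzero restriction to $D_\rho$, and lift it through a $G$-equivariant splitting of $\nu$ in \eqref{eq:BB} to a $B$-eigenvector $\xi$ of weight $m$. Then you use transversality of $\xi(x)$ to $D_\rho$ at a point $x\in Y_\sigma\setminus\supp E$, together with tangency of $\xi$ to the $D_{\rho'}$, $\rho'\in\sigma(1)\setminus\{\rho\}$.

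Your Step~3 is fine and differs only cosmetically from the paper. You use invariance of those $D_{\rho'}$ under $U_m$, plus the fact that $x$ avoids the closed set $\bigcup_{\eta\notin\sigma(1)}D_\eta$, so a general point of the irreducible curve $U_mx$ avoids it too. The paper instead places $U_m$ in the identity component of the joint stabilizer of all $D_{\rho'}$, $\rho'\neq\rho$. Both rest on the same fact: a field in $\Lie\Aut^0(X)$ tangent to a divisor lies in the Lie algebra of its stabilizer. The Local Structure Theorem chart is not needed there. Also, your local formula for the normal component should read $(t_\rho f_m)|_{D_\rho}\,\partial/\partial t_\rho$, since $f_m$ has a pole along $D_\rho$.

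The step you have not actually proved is the nilpotency of $\xi$, and the justification you propose is false as stated. A weight vector of nonzero weight for a torus in a linear algebraic group need not lie in a single root space or in the Lie algebra of the unipotent radical. The image of $T$ in $\Aut^0(X)$ is in general not a maximal torus, and $\xi$ may be a sum of weight vectors of a maximal torus for several weights restricting to $m$. Already for $t\mapsto\operatorname{diag}(t,1,t^{-1})$ in $\SL_3$, the element $E_{12}+E_{23}$ has weight $1$. Ad-nilpotency, as you note, is not enough either.

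The missing argument is one line, and it is the paper's. Embed $\Aut^0(X)\subseteq\operatorname{GL}(V)$ faithfully. The relation $\operatorname{Ad}(t)\xi=m(t)\xi$ with $m(T)=\K^\times$ shows that $\xi$ is conjugate to $c\xi$ for every $c\in\K^\times$. So its characteristic polynomial is invariant under scaling, and all its eigenvalues are $0$. Equivalently, $\xi$ maps the $T$-weight space $V_\lambda$ into $V_{\lambda+m}$, and only finitely many weights occur. Hence $U_m=\exp(\K\xi)\subseteq\Aut^0(X)$ is a $\Ga$-subgroup normalized by $B$, and the rest of your argument goes through.

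Finally, the $\operatorname{GB}(X)$ assertion follows directly from $G,U_m\subseteq\operatorname{GB}(X)$. Corollary~\ref{cor:necessary} is needed only for $\Gamma(Y_\sigma)=\Gamma(Y_{\sigma'})$.
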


\begin{proof}
\emph{Step 1: a $B$-semiinvariant section of $\cO_X(D_\rho)$.} Since $D_\rho$ is $G$-stable, the line bundle~$\cO_X(D_\rho)$ has a canonical $G$-linearization as a subsheaf of $\K(X)$. Its canonical section $s_\rho$, represented by $1$, is $G$-invariant. Consequently
\[
 s=f_m s_\rho
\]
is a rational $B$-semiinvariant section of $\cO_X(D_\rho)$ of weight $m$, and $m\ne0$ since $\pair{v_\rho}{m}=-1$. By~\eqref{eq:principal} and conditions~\eqref{eq:root1}--\eqref{eq:root4}, the divisor of $s$ equals
\[
 E=D_\rho+\divisor(f_m)=\sum_{\eta\in\cF(1)\setminus\sigma(1)}\pair{v_\eta}{m}D_\eta+\sum_{F\in\cD}\pair{v_F}{m}F.
\]
It is effective, so $s\in H^0(X,\cO_X(D_\rho))$. The colors do not contain $Y_\sigma$ since $X$ is toroidal, and the divisors $D_\eta$ with $\eta\in\cF(1)\setminus\sigma(1)$ also do not contain $Y_\sigma$. Therefore we can choose a point $x\in Y_\sigma\setminus\supp E$. We have $s(x)\ne0$ and the restriction
\[
 \overline s=s|_{D_\rho}\in H^0(D_\rho,\cO_{D_\rho}(D_\rho))
\]
is nonzero at $x$ since $Y_\sigma\subseteq D_\rho$.

\emph{Step 2: a $B$-eigenvector in $H^0(X,\cT_X)$.} Consider the vector $\tilde s$ in the right-hand side of~\eqref{eq:BB} whose $\rho$-component is $\overline s$ and whose other components are zero. It is a $B$-eigenvector of weight $m$, since the restriction to the $G$-stable divisor $D_\rho$ is $G$-equivariant. Since $G$ is reductive, the surjection $\nu$ has a $G$-equivariant splitting. Therefore, there is a $B$-eigenvector
\[
 \xi_m\in H^0(X,\cT_X)=\Lie\Aut^0(X)
\]
of weight $m$, such that $\nu(\xi_m)=\tilde s$. The normal components of $\xi_m$ along all $D_{\rho'}$ with $\rho'\ne\rho$ are zero, so $\xi_m$ is tangent to these divisors. The normal component of $\xi_m$ along $D_\rho$ is $\overline s$, and $\overline s(x)\ne0$ which yields $\xi_m(x)\notin T_xD_\rho$.

\emph{Step 3: the $B$-root subgroup.} Since $\xi_m$ is a $B$-eigenvector, we have $\operatorname{Ad}(t)\xi_m=m(t)\xi_m$ for~$t\in T$, and $m(T)=\K^\times$ since $m\ne0$. This means that the finite set of eigenvalues of~$\xi_m$ in a faithful representation of $\Aut^0(X)$ is invariant under multiplication by~$\K^\times$, and hence all its eigenvalues are zero. Thus, $\xi_m$ is nilpotent and the group
\[
 U_m = \{\exp(t\xi_m) \mid t\in\K\}\subseteq\Aut^0(X)
\]
is a $B$-root subgroup.

\emph{Step 4: the transition.} Consider the following group:
\[
 K = \Big(\bigcap_{\rho'\in\cF(1)\setminus\{\rho\}} \operatorname{Stab}_{\Aut^0(X)}(D_{\rho'})\Big)^0\subseteq \Aut^0(X).
\]
Its Lie algebra is
\[
 \Lie K = \{\zeta \in H^0(X, \cT_X) \mid \text{$\zeta$ is tangent to $D_{\rho'}$ for $\rho'\ne\rho$}\}.
\]
By Step~2 we have $\xi_m\in\Lie K$, hence $U_m\subseteq K$.

The orbit $U_mx$ is not contained in $D_\rho$. Otherwise, $\xi_m$ would be tangent to $D_\rho$ which is a contradiction with Step 2. Choose $y\in U_mx\setminus D_\rho$. For every $\rho'\ne\rho$ we have
\[
 y \in D_{\rho'} \quad \Longleftrightarrow \quad  x\in D_{\rho'} \quad\Longleftrightarrow\quad \rho'\in\sigma(1),
\]
since $U_m\subseteq K$. Hence $y$ lies in $D_{\rho'}$ if and only if $\rho'\in\sigma(1)\setminus\{\rho\}=\sigma'(1)$. It remains to say that a $G$-orbit is uniquely determined by the set of $G$-stable prime divisors containing it, so $y\in Y_{\sigma'}$.

Finally, $Y_\sigma$ and $Y_{\sigma'}$ lie in the same $\operatorname{GB}(X)$-orbit, since $\operatorname{GB}(X)$ contains $G$. The equality~$\Gamma(Y_\sigma)=\Gamma(Y_{\sigma'})$ follows from Corollary~\ref{cor:necessary} since $\operatorname{GB}(X)\subseteq\Aut^0(X)$.
\end{proof}

\subsection{The orbit criterion}\label{subsec:criterion}

Now we are ready to prove the main result of this section.

\begin{theorem}\label{thm:main-complete}
    Let $X$ be a smooth complete toroidal spherical $G$-variety, and let $x, x'\in X$. Then the following conditions are equivalent:
    \begin{enumerate}
        \item $x$ and $x'$ lie in the same $\Aut^0(X)$-orbit;
        \item $x$ and $x'$ lie in the same $\operatorname{GB}(X)$-orbit;
        \item $\Gamma(Gx)=\Gamma(Gx')$.
    \end{enumerate}
\end{theorem}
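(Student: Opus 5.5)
The implications $(2)\Rightarrow(1)\Rightarrow(3)$ are immediate from what precedes: $\operatorname{GB}(X)\subseteq\Aut^0(X)$ gives the first, and Corollary~\ref{cor:necessary} gives the second. The real content is $(3)\Rightarrow(2)$. Since $G\subseteq\operatorname{GB}(X)$, it suffices to work with $G$-orbits. So let $Y_\sigma=Gx$ and $Y_\tau=Gx'$ with $\Gamma(Y_\sigma)=\Gamma(Y_\tau)$. The plan is to connect each of them, by repeated application of the Ray-removal Lemma~\ref{lem:transition}, to the orbit $Y_{\sigma\cap\tau}$. Here $\sigma\cap\tau$ is a common face, because $\cF$ is a fan.

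First I compare the monoids via~\eqref{eq:monoid-fan}. Each contains all color classes together with $[D_\eta]$ for $\eta\notin\sigma(1)$, respectively $\eta\notin\tau(1)$. Take any $\rho\in\sigma(1)\setminus\tau(1)$. Then $[D_\rho]\in\Gamma(Y_\tau)=\Gamma(Y_\sigma)$, so condition~\eqref{eq:deletion} holds for the pair $(\sigma,\rho)$. Lemma~\ref{lem:transition} then shows that $Y_\sigma$ and $Y_{\sigma_1}$ lie in the same $\operatorname{GB}(X)$-orbit, where $\sigma_1$ is the face spanned by $\sigma(1)\setminus\{\rho\}$, and also that $\Gamma(Y_{\sigma_1})=\Gamma(Y_\sigma)$.

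Next I iterate. The new cone $\sigma_1$ still satisfies $\Gamma(Y_{\sigma_1})=\Gamma(Y_\tau)$, and $\sigma_1(1)\setminus\tau(1)$ has one fewer element. Repeating the step, I remove every ray of $\sigma$ not lying in $\tau$. Regularity guarantees that every subset of $\sigma(1)$ spans a face of $\sigma$. The process ends at the face spanned by $\sigma(1)\cap\tau(1)$, which is $\sigma\cap\tau$ because both cones are regular faces of a simplicial fan. The same argument with the roles of $\sigma$ and $\tau$ exchanged connects $Y_\tau$ to $Y_{\sigma\cap\tau}$. Hence $x$ and $x'$ lie in one $\operatorname{GB}(X)$-orbit.

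The main obstacle is already handled in Lemma~\ref{lem:transition}. What remains is to check that, at each step, the equality of monoids is preserved, which the lemma provides, and that the condition $[D_\rho]\in\Gamma(Y_{\sigma_i})$ still holds for the next ray. This uses the fact that $\rho\notin\tau(1)$, so $D_\rho$ does not contain $Y_\tau$. A minor point is the identification $\operatorname{span}(\sigma(1)\cap\tau(1))=\sigma\cap\tau$, which holds since the fan is regular.
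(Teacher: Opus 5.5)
Your proposal is correct and is essentially the paper's own argument: you reduce to $G$-orbits, note that $[D_\rho]\in\Gamma(Y_\tau)=\Gamma(Y_\sigma)$ for each $\rho\in\sigma(1)\setminus\tau(1)$, and apply Lemma~\ref{lem:transition} repeatedly (it preserves $\Gamma$) to pass from $Y_\sigma$, and symmetrically from $Y_\tau$, to $Y_{\sigma\cap\tau}$. One minor point: the equality $\sigma\cap\tau=\operatorname{cone}(\sigma(1)\cap\tau(1))$ holds in any fan, so regularity is needed only to guarantee that each intermediate span of a subset of $\sigma(1)$ is a face.
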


\begin{proof}
The implication $(2)\Rightarrow(1)$ is obvious since $\operatorname{GB}(X)\subseteq\Aut^0(X)$. The implication $(1)\Rightarrow(3)$ is Corollary~\ref{cor:necessary}. We need to prove $(3)\Rightarrow(2)$.

Since $\operatorname{GB}(X)$ contains the image of $G$, it suffices to show that the $G$-orbits $Gx$ and $Gx'$ lie in the same $\operatorname{GB}(X)$-orbit. We have $Gx=Y_\sigma$ and $Gx'=Y_\tau$ for some cones $\sigma,\tau\in \cF$, and $\Gamma(Y_{\sigma})=\Gamma(Y_{\tau})$ by assumption.

Take $\rho\in\sigma(1)\setminus\tau(1)$. The divisor $D_\rho$ does not contain $Y_\tau$ and hence
\[
 [D_\rho]\in\Gamma(Y_\tau)=\Gamma(Y_\sigma).
\]
By Lemma~\ref{lem:transition}, the orbit $Y_\sigma$ and the orbit $Y_{\sigma'}$, where $\sigma' = \operatorname{cone}(\sigma(1)\setminus \rho)$, lie in the same $\operatorname{GB}(X)$-orbit and $\Gamma(Y_{\sigma'})=\Gamma(Y_\sigma)=\Gamma(Y_\tau)$. Hence we can repeat this process for all rays in $\sigma(1)\setminus\tau(1)$. In this way we connect $Y_\sigma$ with $Y_{\sigma\cap\tau}$. Indeed, $\sigma\cap\tau$ is a common face of $\sigma$ and $\tau$, and its rays are exactly their common rays~$\sigma(1)\cap\tau(1)$. We use the fact that all cones are regular since $X$ is smooth. Applying the same argument to $\tau$, we connect $Y_\tau$ with $Y_{\sigma\cap\tau}$. Therefore $Y_\sigma$ and $Y_\tau$ lie in the same $\operatorname{GB}(X)$-orbit.
\end{proof}

The following corollary has the same proof as Corollary~\ref{cor:full-affine}.

\begin{corollary}\label{cor:full-complete}
Let $X$ be a smooth complete toroidal spherical variety. Two $G$-orbits $Y,Y'$ are contained in the same $\Aut(X)$-orbit if and only if there is an automorphism $\varphi\in\Aut(X)$ such that
\[
 \overline{\varphi}(\Gamma(Y))=\Gamma(Y'),
\]
where $\overline{\varphi}$ denotes the image of $\varphi$ in $\Aut(\Pic(X))$.
\end{corollary}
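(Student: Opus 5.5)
The plan mirrors Corollary~\ref{cor:full-affine}: reduce the question about $\Aut(X)$ to one about $\Aut^0(X)$, and then invoke Theorem~\ref{thm:main-complete} together with the equivariance of the invariant $\Gamma(y)$ from Proposition~\ref{prop:intrinsic}.

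\emph{Necessity.} Suppose some $\varphi\in\Aut(X)$ maps a point $y\in Y$ into $Y'$. Put $y'=\varphi(y)$. I first check the identity $\Gamma(\varphi(y))=\overline{\varphi}(\Gamma(y))$, which was already used in the proof of Corollary~\ref{cor:necessary}. Let $s\in H^0(X,\mathcal L)$ satisfy $s(y)\neq 0$. Then the pushforward $(\varphi^{-1})^*s$ is a section of $(\varphi^{-1})^*\mathcal L$ that does not vanish at $\varphi(y)$. Its class is $[(\varphi^{-1})^*\mathcal L]=[\varphi(D)]$ when $\mathcal L=\cO_X(D)$, and this equals $\overline{\varphi}([\mathcal L])$. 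The reverse inclusion follows by applying the same argument to $\varphi^{-1}$. Combining this with Proposition~\ref{prop:intrinsic} gives
\[
\Gamma(Y')=\Gamma(y')=\overline{\varphi}(\Gamma(y))=\overline{\varphi}(\Gamma(Y)).
\]

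\emph{Sufficiency.} Suppose $\overline{\varphi}(\Gamma(Y))=\Gamma(Y')$ for some $\varphi\in\Aut(X)$. Choose $y\in Y$. The point $\varphi(y)$ lies in some $G$-orbit $Y''=G\varphi(y)$. By the computation above and Proposition~\ref{prop:intrinsic},
\[
\Gamma(Y'')=\Gamma(\varphi(y))=\overline{\varphi}(\Gamma(Y))=\Gamma(Y').
\]
Theorem~\ref{thm:main-complete}, implication $(3)\Rightarrow(1)$, then places $\varphi(y)$ and any point $y'\in Y'$ in the same $\Aut^0(X)$-orbit. Hence $y$ and $y'$ lie in the same $\Aut(X)$-orbit. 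Since $\Aut(X)$-orbits are unions of $G$-orbits, $Y$ and $Y'$ are contained in the same $\Aut(X)$-orbit.

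The only step that needs real care is the equivariance formula $\Gamma(\varphi(y))=\overline{\varphi}(\Gamma(y))$. There one must check the sign and direction conventions, namely that $\overline{\varphi}([D])=[\varphi(D)]$ agrees with pulling back line bundles by $\varphi^{-1}$. Everything else is a direct application of the results already proved.
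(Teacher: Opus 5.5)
Your proposal is correct and follows the paper's route: the paper proves this corollary exactly as Corollary~\ref{cor:full-affine}, by combining the equivariance $\Gamma(\varphi(y))=\overline{\varphi}(\Gamma(y))$ (already used in Corollary~\ref{cor:necessary}) with Proposition~\ref{prop:intrinsic} and Theorem~\ref{thm:main-complete}. You also fill in two details the paper leaves implicit. You verify the equivariance explicitly via $(\varphi^{-1})^*\cO_X(D)\cong\cO_X(\varphi(D))$, and you pass through $Y''=G\varphi(y)$, which is needed because $\varphi$ need not normalize $G$.
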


In the complete case, the image of $\Aut(X)$ in $\Aut(\Pic(X))$ is a finite group. Indeed, the effective cone $\operatorname{Eff}(X)\subset \Pic(X)$ is generated by the classes of $B$-stable prime divisors. It follows from the fact that $B$ has a fixed point on $|D|$ for any divisor $D$ by the Borel's theorem since $|D|$ is complete. The cone $\operatorname{Eff}(X)$ is also pointed because an effective divisor on a complete variety cannot be principal. This means that the image of~$\Aut(X)$ in~$\Aut(\Pic(X))$ permutes indecomposable vectors generating the cone $\operatorname{Eff}(X)$, and hence it should be finite. Therefore, this reduces the problem of describing the orbits to the study of the action of a finite group on $\Pic(X)$.

Proposition~\ref{prop:inequalities} allows to describe the $\Aut^0(X)$-orbits directly from the colored fan of~$X$. Consider the graph $\mathcal G_\cF$ whose vertices are the cones $\sigma\in\cF$, and two cones~$\sigma$ and~$\sigma'=\operatorname{cone}(\sigma(1)\setminus\{\rho\})$, $\rho\in\sigma(1)$, are joined by an edge if and only if there is an element~$m\in M$ satisfying conditions~\eqref{eq:root1}--\eqref{eq:root4}.

\begin{corollary}\label{cor:graph}
The $\Aut^0(X)$-orbits in $X$ correspond to the connected components of $\mathcal{G}_\cF$, i.e. two $G$-orbits $Y_\sigma$ and $Y_\tau$ lie in the same $\Aut^0(X)$-orbit if and only if $\sigma$ and $\tau$ lie in the same connected component of $\mathcal G_\cF$. If two cones~$\sigma$ and~$\tau$ are joined by an edge, then the corresponding orbits~$Y_{\sigma}$ and~$Y_\tau$ can be connected by a $B$-root subgroup.
\end{corollary}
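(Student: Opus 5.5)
The plan is to derive Corollary~\ref{cor:graph} from Theorem~\ref{thm:main-complete}, Lemma~\ref{lem:transition} and Proposition~\ref{prop:inequalities}. Note first that the $G$-orbits of $X$ are exactly the $Y_\sigma$, $\sigma\in\cF$. Every $\Aut^0(X)$-orbit is a union of $G$-orbits. So it suffices to show that $Y_\sigma$ and $Y_\tau$ lie in the same $\Aut^0(X)$-orbit if and only if $\sigma$ and $\tau$ are in the same connected component of $\mathcal G_\cF$.

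\emph{Edges connect orbits.} Suppose $\sigma$ and $\sigma'=\operatorname{cone}(\sigma(1)\setminus\{\rho\})$ are joined by an edge. Then there is $m\in M$ satisfying \eqref{eq:root1}--\eqref{eq:root4}. Proposition~\ref{prop:inequalities} gives condition~\eqref{eq:deletion}. Lemma~\ref{lem:transition} then provides a $B$-root subgroup $U_m$ with $U_mx\cap Y_{\sigma'}\neq\varnothing$ for some $x\in Y_\sigma$. This proves the last sentence of the corollary. Since $U_m$ and $G$ lie in $\Aut^0(X)$, the orbits $Y_\sigma$ and $Y_{\sigma'}$ lie in one $\Aut^0(X)$-orbit. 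By transitivity, the same holds for any two cones in one connected component.

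\emph{Converse.} Suppose $Y_\sigma$ and $Y_\tau$ lie in one $\Aut^0(X)$-orbit. By Theorem~\ref{thm:main-complete} we have $\Gamma(Y_\sigma)=\Gamma(Y_\tau)$. Now repeat the argument in the proof of $(3)\Rightarrow(2)$ there. Fix $\rho\in\sigma(1)\setminus\tau(1)$. Then $D_\rho\not\supseteq Y_\tau$, so $[D_\rho]\in\Gamma(Y_\tau)=\Gamma(Y_\sigma)$. By Proposition~\ref{prop:inequalities} there is an $m$ satisfying \eqref{eq:root1}--\eqref{eq:root4}, so $\sigma$ and $\operatorname{cone}(\sigma(1)\setminus\{\rho\})$ are joined by an edge of $\mathcal G_\cF$. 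Lemma~\ref{lem:transition} gives $\Gamma(Y_{\sigma'})=\Gamma(Y_\sigma)=\Gamma(Y_\tau)$, so the invariant is preserved and we can iterate. Removing all rays of $\sigma(1)\setminus\tau(1)$ yields a path in $\mathcal G_\cF$ from $\sigma$ to $\sigma\cap\tau$. Here we use regularity: every subset of $\sigma(1)$ spans a face, and $\sigma\cap\tau$ is the face spanned by $\sigma(1)\cap\tau(1)$. Symmetrically there is a path from $\tau$ to $\sigma\cap\tau$, so $\sigma$ and $\tau$ are in the same component.

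The only delicate point is maintaining the equality of monoids along the path, so that condition~\eqref{eq:deletion} holds at every step. This is exactly the $\Gamma$-invariance statement of Lemma~\ref{lem:transition} (or Corollary~\ref{cor:necessary}). Everything else is bookkeeping, and the edge relation, though defined asymmetrically via face removal, is used as an undirected graph.
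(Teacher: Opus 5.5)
Your proposal is correct and follows essentially the same route as the paper. Edges give $B$-root-subgroup transitions via Lemma~\ref{lem:transition}, and for the converse you rerun the ray-removal argument from the proof of $(3)\Rightarrow(2)$ in Theorem~\ref{thm:main-complete} to reach $\sigma\cap\tau$ from both $\sigma$ and $\tau$; you are slightly more explicit than the paper in invoking Proposition~\ref{prop:inequalities} at each step to certify that each ray removal is an edge of $\mathcal G_\cF$.
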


\begin{proof}
If $\sigma$ and $\sigma'$ are joined by an edge, then $Y_\sigma$ and $Y_{\sigma'}$ are connected by a $B$-root subgroup by Lemma~\ref{lem:transition}. In particular, they lie in the same $\Aut^0(X)$-orbit. Conversely, if $Y_\sigma$ and $Y_\tau$ lie in the same $\Aut^0(X)$-orbit, then $\Gamma(Y_\sigma)=\Gamma(Y_\tau)$ by Theorem~\ref{thm:main-complete}. The proof of the implication $(3)\Rightarrow(2)$ in Theorem~\ref{cor:full-complete} gives the path in $\mathcal G_\cF$ from $\sigma$ and from $\tau$ to $\sigma\cap\tau$.
\end{proof}

\begin{example}
    Consider the \emph{space of complete conics}
\[
 X=\{([q],[q'])\in\PP(\operatorname{S}^2\K^3)\times\PP(\operatorname{S}^2\K^{3*})\mid q\cdot q'\in\K\cdot I\},
\]
where $I$ is the identity matrix. This variety is described in detail in~\cite[Example 17.3]{Tim11}.
    
    It is a smooth complete toroidal spherical variety for $G=\SL_3$ naturally acting on both factors. The open $G$-orbit is $\SL_3/\SO_3$, and its weight lattice is $M=\gen{2\alpha_1,2\alpha_2}$, where $\alpha_1,\alpha_2$ are the simple roots of $\SL_3$. The variety $X$ is also simple, and hence its fan $\mathcal F$ consists of one cone coinciding with $\mathcal V$. It has two $G$-stable prime divisors~$E_1$ and~$E_2$ and two colors~$D_1$ and~$D_2$. The divisor~$E_1$ corresponds to pairs of lines while the divisor~$E_2$ corresponds to double lines. The color data in the lattice $N$ with the basis~$n_1=-\omega_1^\vee/2$,~$n_2=-\omega_2^\vee/2$, where~$\omega_1^\vee$ and~$\omega_2^\vee$ are the fundamental coweights of~$\SL_3$, are shown in Figure~\ref{fig:conics}.

\begin{figure}[h!]
    \centering
    \begin{tikzpicture}[scale=0.85, >=Stealth, font=\footnotesize]
        
        \fill[blue!10] (0,0) -- (3.5,0) -- (3.5,3.5) -- (0,3.5) -- cycle;
        
        \foreach \x in {-3,-2,-1,0,1,2,3} {
            \foreach \y in {-3,-2,-1,0,1,2,3} {
                \fill[gray!50] (\x,\y) circle (1pt);
            }
        }
        
        \draw[->, thin, gray!80] (-3.5,0) -- (4,0) node[right, text=black] {$n_1$};
        \draw[->, thin, gray!80] (0,-3.5) -- (0,4) node[above, text=black] {$n_2$};
        
        \fill (0,0) circle (1.5pt) node[below left] {$0$};
        
        \node at (2, 2) [blue!80!black] {$\mathcal{V} = \mathcal F$};
        
        \draw[->, very thick, blue!80!black] (0,0) -- (1,0) node[below right=-1pt] {$v_{E_1} = (1,0)$};
        \draw[->, very thick, blue!80!black] (0,0) -- (0,1) node[above right=-3pt] {$v_{E_2} = (0,1)$};
        
        \draw[->, very thick, red] (0,0) -- (-2,1) node[above left] {$v_{D_1} = (-2,1)$};
        \draw[->, very thick, red] (0,0) -- (1,-2) node[below right] {$v_{D_2} = (1,-2)$};
        
        \fill[red] (-2,1) circle (1.5pt);
        \fill[red] (1,-2) circle (1.5pt);

    \end{tikzpicture}
    \caption{The colored fan for the space of complete conics.}\label{fig:conics}
\end{figure}

Thus we have 4 cones: $\sigma_{12}=\operatorname{cone}(v_{E_1}, v_{E_2})$, $\sigma_1=\operatorname{cone}(v_{E_1})$, $\sigma_2=\operatorname{cone}(v_{E_2})$, $\sigma_0=\{0\}.$ One can easily see that for every cone and any of its rays the system~\eqref{eq:root1}-\eqref{eq:root4} is inconsistent. For this write $m=(a,b)$ in the basis of $M$ dual to $(n_1,n_2)$. Then we have
\begin{itemize}
\item for $\sigma_{12}$ and the ray of $E_1$, conditions~\eqref{eq:root1} and~\eqref{eq:root2} give $a=-1$ and $b=0$, so $\pair{v_{D_2}}{m}=-1$;
\item for $\sigma_{12}$ and the ray of $E_2$, we get $a=0$ and $b=-1$, so $\pair{v_{D_1}}{m}=-1$;
\item for $\sigma_1$, conditions~\eqref{eq:root1} and~\eqref{eq:root3} give $a=-1$ and $b\geq0$, so $\pair{v_{D_2}}{m}=-1-2b<0$;
\item for $\sigma_2$, we get $b=-1$ and $a\geq0$, so $\pair{v_{D_1}}{m}=-2a-1<0$;
\item the cone $\sigma_0$ has no rays.
\end{itemize}
These calculations show that in each case condition~\eqref{eq:root4} fails. Hence the graph $\mathcal G_\cF$ has no edges, and by Corollary~\ref{cor:graph} the $G$-orbits coincide with the $\Aut^0(X)$-orbits for the space of complete conics.
\end{example}

\end{document}